\theoremstyle{plain}
\newtheorem{theorem}{Theorem}[section]
\crefname{theorem}{Theorem}{Theorems}
\newtheorem{proposition}[theorem]{Proposition}
\crefname{proposition}{Proposition}{Propositions}
\newtheorem{lemma}[theorem]{Lemma}
\crefname{lemma}{Lemma}{Lemmas}
\newtheorem{corollary}[theorem]{Corollary}
\crefname{corollary}{Corollary}{Corollaries}
\theoremstyle{definition}
\newtheorem{definition}[theorem]{Definition}
\crefname{definition}{Definition}{Definitions}
\newtheorem{example}[theorem]{Example}
\crefname{example}{Example}{Examples}
\newtheorem{def-prop}[theorem]{Definition-Proposition}
\crefname{def-prop}{Definition-Proposition}{}
\theoremstyle{remark}
\newtheorem{remark}[theorem]{Remark}
\crefname{remark}{Remark}{Remarks}
\crefname{claim}{Claim}{Claims}
\crefname{notation}{Notation}{}
\crefname{question}{Question}{Questions}
\crefname{chapter}{Chapter}{Chapters}
\crefname{section}{Section}{Sections}
\crefname{subsection}{Subsection}{Subsections}
\crefname{figure}{Figure}{Figures}
\newcommand{\C}{\mathbb{C}}
\newcommand{\Z}{\mathbb{Z}}
\newcommand{\overbar}[1]{\mkern 1.5mu\overline{\mkern-1.5mu#1\mkern-1.5mu}\mkern 1.5mu}
\newcommand*\colvec[3][]{
	\begin{pmatrix}\ifx\relax#1\relax\else#1\\\fi#2\\#3\end{pmatrix}
}
\DeclareMathOperator{\vsspan}{span}
\DeclareMathOperator{\Tr}{Tr}
\DeclareMathOperator{\id}{id}
\DeclareMathOperator{\co}{co}
\DeclareMathOperator{\ex}{ex}
\DeclareMathOperator{\prop}{prop}
\DeclareSymbolFontAlphabet{\mathbbm}{bbold}%
\DeclareSymbolFontAlphabet{\mathbb}{AMSb}%
\def\black{\color{black}}
\def\B{\mathcal{B}}
\def\cb{\textup{cb}}
\def\F{\mathcal{F}}
\def\H{\mathcal{H}}
\def\id{\text{id}}
\def\S{\mathcal{S}}
\def\T{\mathbb{T}}
\title[GH convergence of spectral truncations for tori]{Gromov--Hausdorff convergence of spectral truncations for tori}
\author{Malte Leimbach}
\author{Walter D. van Suijlekom}
\address{Institute for Mathematics, Astrophysics and Particle Physics, Radboud
	University Nijmegen, Heyendaalseweg 135, 6525 AJ Nijmegen, The Netherlands.
}
\email{m.leimbach@math.ru.nl, waltervs@math.ru.nl}
\date{January 17, 2024}
\begin{document}
	
	\begin{abstract}
		We consider operator systems associated to spectral truncations of tori.
		We show that their state spaces, when equipped with the Connes distance function, converge in the Gromov--Hausdorff sense to the space of all Borel probability measures on the torus equipped with the Monge--Kantorovich distance.
		A crucial role will be played by the relationship between Schur and Fourier multipliers.
		Along the way, we introduce the spectral Fejér kernel and show that it is a good kernel.
		This allows to make the estimates sufficient to prove the desired convergence of state spaces.
		We conclude with some structure analysis of the pertinent operator systems, including the C*-envelope and the propagation number, and with an observation about the dual operator system. 
	\end{abstract}
	
	\maketitle
	
	\tableofcontents
	
	\section{Introduction}
	In a spectral approach to geometry, such as the one advocated in noncommutative geometry \cite{Con94}, a natural question that arises is how one may approximate a geometric space if only part of the spectral data is available \cite{CvS21}. 
	In fact, for the so-called spectral truncations that are relevant in this paper one is naturally led to consider quantum metric spaces in the sense of Rieffel \cite{Rie04}.

	More precisely, we consider a {\em spectral triple} $(A,\mathcal H,D)$ consisting of a C*-algebra $A$ acting as bounded operators on a Hilbert space $\mathcal H$, together with a self-adjoint operator $D$ such that $[D,a]$ extends to a bounded operator for $a$ in a dense $*$-subalgebra of $A$, and such that the resolvent of $D$ is a compact operator. A spectral truncation for this set of data is given in terms of a spectral projection $P_\Lambda$ of $D$ onto the eigenspaces with eigenvalues of modulus $\leq \Lambda$. The C*-algebra $A$ is then replaced by an operator system, to wit $P_\Lambda A P_\Lambda$. It acts on the Hilbert space $P_\Lambda \mathcal H$ and also $D$ restricts to a self-adjoint operator thereon. 
	
	The above question may then be sharply formulated in terms of Gromov--Hausdorff convergence (as $\Lambda \to \infty$) of the state spaces $\S(P_\Lambda A P_\Lambda)$ when equipped with the Connes distance function \cite{Con94}
	\begin{align}\label{eqn:DistanceFormula-trunc}
		d_{P_\Lambda A P_\Lambda}(\varphi, \psi) &= \sup_{a \in P_\Lambda A P_\Lambda} \{|\varphi(a)-\psi(a)| \, : \, \|[P_\Lambda D P_\Lambda ,a]\| = 1\}.
		\intertext{The limit structure one is aiming for is then of course the state space $\S(A)$, this time equipped with the metric }
		d_A(\varphi, \psi) &= \sup_{a \in  A}  \{|\varphi(a)-\psi(a)| \, : \, \|[ D  ,a]\| = 1\}.\label{eqn:DistanceFormula}
	\end{align}
	Note that in the definition of quantum metric spaces there is an additional (and natural) requirement of compatibility betweeen the above metric structure and the given weak-$\ast$ topology on the pertinent state spaces ({\em \emph{cf.}} \cite[Definition 2.1]{Rie04}). If $A = \mathrm{C}(M)$ and $D=D_M$ is the Dirac operator on a compact Riemannian spin manifold $M$ ---indeed our case of interest--- this is the case, as was shown in \cite{Con89}. In fact, Connes'  distance function then coincides with the Monge--Kantorovich distance on the state space $\S(\mathrm{C}(M))$ of all Borel probability measures. In addition, for finite-dimensional operator systems ---such as the above $P_\Lambda A P_\Lambda$--- Connes' distance function metrizes the weak-$\ast$ topology as follows from \cite[Theorem 1.8]{Rie98} (see also \cite[Remark 6]{vS21}). 
	
	In a previous work, a convergence of spectral truncations has been established for the circle, when equipped with the natural Dirac operator \cite{vS21}. This was formulated in the following general framework, which we will also exploit here.

	\begin{definition} An {\em operator system spectral triple} is a triple $(\mathcal E,\mathcal H,D)$ where $\mathcal E$ is a dense subspace of a (concrete) operator system $E$ in $\mathcal B(\mathcal H)$, $\mathcal H$ is a Hilbert space and $D$ is a self-adjoint operator in $\H$ with compact resolvent and such that $[D,T]$ is a bounded operator for all $T \in \mathcal E$.
	\end{definition}
	The relation between sequences of operator system spectral triples and a limit structure is then captured by the following notion of an approximate order isomorphism. 
	\begin{definition}
		\label{defn:C1-approx}
		Let $(\mathcal{E}, \mathcal{H}, D)$ and $(\mathcal{E}_\Lambda, \mathcal{H}_\Lambda, D_\Lambda)$, for all $\Lambda \geq 0$, be operator system spectral triples.
		For all $\Lambda$, let $\rho_\Lambda : \mathcal{E} \rightarrow \mathcal{E}_\Lambda$ and $\sigma_\Lambda : \mathcal{E}_\Lambda \rightarrow \mathcal{E}$ be linear maps with the following properties:
		\begin{enumerate}[label=(\roman*)]
			\item $\rho_\Lambda$ and $\sigma_\Lambda$ are positive and unital,
			\item $\rho_\Lambda$ and $\sigma_\Lambda$ are \emph{$\mathrm{C}^1$-contractive}, i.e.\@ they are contractive both with respect to norm $\|\cdot\|$ and with respect to Lipschitz seminorm $\|[D,\cdot]\|$, respectively $\|[D_\Lambda,\cdot]\|$,
			\item the compositions $\sigma_\Lambda \circ \rho_\Lambda$ and $\rho_\Lambda \circ \sigma_\Lambda$ approximate the respective identities on $\mathcal{E}$ and $\mathcal{E}_\Lambda$ with respect to Lipschitz seminorm, i.e.
			\begin{align}
				\|a - \rho_\Lambda \circ \sigma_\Lambda(a)\| &\leq \gamma_\Lambda \|[D,a]\| \\
				\|T - \sigma_\Lambda \circ \rho_\Lambda(T)\| &\leq \gamma_\Lambda \|[D_\Lambda,T]\|,
			\end{align}
		\end{enumerate}
		for some net $(\gamma_\Lambda)_{\Lambda \geq 0}$ with $\gamma_\Lambda \rightarrow 0$ as $\Lambda \rightarrow \infty$. 
		Then we call the pair of maps $(\rho,\sigma)$ (by which we mean the collection of pairs of maps $(\rho_\Lambda,\sigma_\Lambda)$) a \emph{$\mathrm{C}^1$-approximate order isomorphism}.
	\end{definition}
	It was then shown in \cite[Theorem 5]{vS21} that if the metrics $d_{\mathcal{E}_\Lambda}$, for all $\Lambda\geq 0$, and $d_{\mathcal{E}}$ (defined as in \eqref{eqn:DistanceFormula-trunc} and \eqref{eqn:DistanceFormula}) metrize the respective weak$^*$-topologies on the state spaces $\mathcal{S}(\mathcal{E}_\Lambda)$ and $\mathcal{S}(\mathcal{E})$ and if a $\mathrm{C}^1$-approximate order isomorphism exists, then the sequence of metric spaces $(\mathcal{S}(\mathcal{E}_\Lambda), d_{\mathcal{E}_\Lambda})$ converges to $(\mathcal{S}(\mathcal{E}), d_{\mathcal{E}})$ in Gromov--Hausdorff distance.
	By exploiting this criterion it was shown in \cite{vS21} that spectral truncations of the circle converge.
	
	Other results on Gromov--Hausdorff convergence that can be cast in this general framework include \cite{Ber19,AKK22,Rie22}. Note also the recent developments around the related notion of propinquity \cite{Lat16,Lat16b,Lat18}, also in the context of spectral triples, but which, however, mainly focuses on C*-algebras. 
	It should be mentioned that, even though we are phrasing the question about convergence of spectral truncations in terms of Gromov--Hausdorff distance, another point of view would be quantum Gromov--Hausdorff distance \cite{Rie04}.
	In fact, these notions are not equivalent \cite{KK22b}.
	However, as the authors point out, it follows from \cite[Proposition 2.14]{KK22a} that Gromov--Hausdorff convergence still implies quantum Gromov--Hausdorff convergence in the case which we consider below.
	
	In this article, we show that the conditions in the above Definition can be met for tori in any dimension $d \geq 1$. Let us spend the remainder of this introduction by giving an extended overview of our setup and approach.
	
	\bigskip
	
	Consider the spectral triple of the $d$-dimensional torus $\mathbb{T}^d = \mathbb{R}^d/2\pi\mathbb{Z}^d$:
	\begin{align}
		\left(\mathrm{C}^\infty(\mathbb{T}^d), \mathrm{L}^2(S(\mathbb{T}^d)), D\right)
	\end{align}
	This consists of the $*$-algebra of smooth functions acting on the Hilbert space of $\mathrm{L}^2$-sections of the spinor bundle $S(\mathbb{T}^d)$ (by multiplication) and the Dirac operator $D$ which acts on the dense subspace of smooth sections of the spinor bundle.
	We identify $S(\mathbb{T}^d)$ with the trivial bundle
	$\mathbb{T}^d \otimes V$, where $V := \mathbb{C}^{\lfloor \nicefrac{d}{2} \rfloor}$, and we write $\mathcal{H} := \mathrm{L}^2(\mathbb{T}^d) \otimes V \cong \mathrm{L}^2(S(\mathbb{T}^d))$ for the Hilbert space.
	Recall that $D = -i \sum_{\mu = 1}^d \partial_\mu \otimes \gamma^\mu$ and that the spectrum of $D$ (which is point-spectrum only) is given by $\sigma(D) = \{\pm (n_1^2 + \cdots + n_d^2)^{\nicefrac{1}{2}} \, : \, n_i \in \mathbb{Z}\}$. 
	The Dirac operator gives rise to the distance function \eqref{eqn:DistanceFormula} on the state space $\mathcal{S}(\mathrm{C}(\mathbb{T}^d))$ which metrizes the weak$^*$-topology on it and recovers the usual Riemannian distance on $\mathbb{T}^d$ when restricted to pure states.
	
	For any $\Lambda \geq 0$, let $P_\Lambda$ be the orthogonal projection to the subspace of $\mathcal{H}$ spanned by the eigenspinors $e_\lambda$ of the eigenvalues $\lambda$ with $|\lambda| \leq \Lambda$.
	More concretely, we have $P_\Lambda \mathcal{H} = \mathrm{span} \{e_n \, : \, n \in \mathbb{Z}^d, \|n\| \leq \Lambda\} \otimes V$, with $e_n(x) := e^{i n \cdot x}$, for all $x \in \mathbb{T}^d$.
	The spectral projection $P_\Lambda$ gives rise to the following \emph{operator system spectral triple}:
	\begin{align}
		\left( P_\Lambda \mathrm{C}^\infty(\mathbb{T}^d) P_\Lambda, P_\Lambda \mathcal{H}, P_\Lambda D P_\Lambda \right)
	\end{align}
	We use the notation $\mathrm{C}(\mathbb{T}^d)^{(\Lambda)} := P_\Lambda \mathrm{C}^\infty(\mathbb{T}^d) P_\Lambda$ and write $D_\Lambda := P_\Lambda D P_\Lambda$. We also abbreviate $d_\Lambda := d_{P_\Lambda \mathrm{C}^\infty(\mathbb{T}^d) P_\Lambda}$ for the distance function defined in (\ref{eqn:DistanceFormula-trunc}).
	
	Observe that elements $T$ of the operator system $\mathrm{C}(\mathbb{T}^d)^{(\Lambda)}$ are of the form $T = (t_{k-l})_{k, l \in \overbar{\mathrm{B}}_\Lambda^\mathbb{Z}}$, where $\overbar{\mathrm{B}}_\Lambda^\mathbb{Z} := \overbar{\mathrm{B}}_\Lambda \cap \mathbb{Z}^d$ is the set of $\mathbb{Z}^d$-lattice points in the closed ball of radius $\Lambda$, and where $t_{k-l} = \langle e_k, T e_l \rangle$.
	In particular, $\mathrm{C}(\mathbb{T}^1)^{(\Lambda)}$ is the operator system of $(2\lfloor\Lambda\rfloor +1) \times (2\lfloor\Lambda\rfloor +1)$-Toeplitz matrices which was investigated at length in \cite{CvS21} and \cite{Far21}.
	
	The candidate for the map $\rho_\Lambda : \mathrm{C}^\infty(\mathbb{T}^d) \rightarrow \mathrm{C}(\mathbb{T}^d)^{(\Lambda)}$ in \autoref{defn:C1-approx} is canonically inherent in the problem, namely, it is the compression map given by $\rho_\Lambda(f) = P_\Lambda f P_\Lambda$.
	It is easy to see that this map is positive, unital and $\mathrm{C}^1$-contractive (\autoref{lem:PropertiesOfTheCompressionMap}).
	It is, however, less obvious what the candidate for the map $\sigma_\Lambda : \mathrm{C}(\mathbb{T}^d)^{(\Lambda)} \rightarrow \mathrm{C}^\infty(\mathbb{T}^d)$ should be.
	Inspired by the choice of the map given in \cite{vS21} in the case of the circle, we propose the following map:
	\begin{align}
		\sigma_\Lambda(T) := \frac{1}{\mathcal{N}_\mathrm{B}(\Lambda)} \mathrm{Tr} \left( \ket{\psi} \bra{\psi} \alpha(T) \right)
	\end{align}
	Here, $\alpha$ is the $\mathbb{T}^d$-action (\ref{eqn:TorusActionOnOperatorSystem}) on $\mathrm{C}(\mathbb{T}^d)^{(\Lambda)}$, the vector $\ket{\psi}$ is given by $\ket{\psi} = \sum_{n \in \overbar{\mathrm{B}}_\Lambda^\mathbb{Z}} e_n$, and $\mathcal{N}_\mathrm{B}(\Lambda) := \# \overbar{\mathrm{B}}_\Lambda \cap \mathbb{Z}^d  = \# \overline{\mathrm{B}}_\Lambda^\mathbb{Z}$ is the number of $\mathbb{Z}^d$-lattice points in the closed ball of radius $\Lambda$.
	Note that the image of an element $T = (t_{k-l})_{k,l \in \overbar{\mathrm{B}}_\Lambda^\mathbb{Z}}$ of the operator system $\mathrm{C}(\mathbb{T}^d)^{(\Lambda)}$ under this map is a function on $\mathbb{T}^d$ as follows:
	\begin{align}
		\sigma_\Lambda(T)(x) 
		&= \frac{1}{\mathcal{N}_\mathrm{B}(\Lambda)} \mathrm{Tr} \left( (1)_{k,l \in \overbar{\mathrm{B}}_\Lambda^\mathbb{Z}} (t_{k-l} e^{i(k-l) \cdot x})_{k,l \in \overbar{\mathrm{B}}_\Lambda^\mathbb{Z}} \right) \\
		&= \frac{1}{\mathcal{N}_\mathrm{B}(\Lambda)} \mathrm{Tr} \left(\left( \sum_{m \in \overbar{\mathrm{B}}_\Lambda^\mathbb{Z}} t_{m-l} e^{i(m-l) \cdot x} \right)_{k,l \in \overbar{\mathrm{B}}_\Lambda^\mathbb{Z}} \right) \\
		&= \frac{1}{\mathcal{N}_\mathrm{B}(\Lambda)} \sum_{m,n \in \overbar{\mathrm{B}}_\Lambda^\mathbb{Z}} t_{m-n} e^{i(m-n) \cdot x},
	\end{align}
	for all $x \in \mathbb{T}^d$.
	One may consider \cite[Section 2]{Rie04} as another instance of inspiration for this choice of map $\sigma_\Lambda$ by realizing that the map $\sigma_\Lambda$ is the formal adjoint of the map $\rho_\Lambda$ when the $\ast$-algebra $\mathrm{C}^\infty(\mathbb{T}^d)$ is equipped with the $\mathrm{L}^2$-inner product and the operator system $\mathrm{C}(\mathbb{T}^d)^{(\Lambda)}$ is equipped with the Hilbert--Schmidt inner product.
	Similarly as for $\rho_\Lambda$, it is easy to see that the map $\sigma_\Lambda$ is positive, unital and $\mathrm{C}^1$-contractive (\autoref{lem:PropertiesOfTheUpwardsMap}).
	
	In order to show that our choice of maps $\rho_\Lambda$ and $\sigma_\Lambda$ gives rise to a $\mathrm{C}^1$-approximate order isomorphism it remains to show that their compositions approximate the respective identities on $\mathrm{C}^\infty(\mathbb{T}^d)$ and $\mathrm{C}(\mathbb{T}^d)^{(\Lambda)}$ in Lipschitz seminorm.
	We show by direct computations (\autoref{lem:ComputationOfCompositions}) that the maps $\sigma_\Lambda \circ \rho_\Lambda$ and $\rho_\Lambda \circ \sigma_\Lambda$ act on $\mathrm{C}^\infty(\mathbb{T}^d)$, respectively $\mathrm{C}(\mathbb{T}^d)^{(\Lambda)}$ as follows:
	\begin{align}
		\sigma_\Lambda \circ \rho_\Lambda (f) &= \left(\mathfrak{m}_\Lambda \widehat{f}\right)\widecheck{\vphantom{f}} =: \mathcal{F}_{\mathfrak{m}_\Lambda} (f) \\
		\rho_\Lambda \circ \sigma_\Lambda (T) &= \left(\mathfrak{m}_\Lambda(k-l) t_{k-l}\right)_{k, l \in \overbar{\mathrm{B}}_\Lambda^\mathbb{Z}} =: \mathcal{S}_{\mathfrak{m}_\Lambda} (T)
	\end{align}
	The map $\mathcal{F}_{\mathfrak{m}_\Lambda}$ is known as \emph{Fourier multiplication} and the map $\mathcal{S}_{\mathfrak{m}_\Lambda}$ as \emph{Schur multiplication}, respectively with \emph{symbol} 
	\begin{align}\label{eqn:Symbol-m}
		\mathfrak{m}_\Lambda(n) := \frac{\mathcal{N}_\mathrm{L}(\Lambda,n)}{\mathcal{N}_\mathrm{B}(\Lambda)},
	\end{align}
	where $\mathcal{N}_\mathrm{L}(\Lambda,n) := \# \overbar{\mathrm{B}}_\Lambda \cap \overbar{\mathrm{B}}_\Lambda(n) \cap \mathbb{Z}^d = \# \overline{\mathrm{L}}_\Lambda^\mathbb{Z}(n)$ is the number of $\mathbb{Z}^d$-lattice points in the intersection of the closed ball of radius $\Lambda$ with a copy of itself translated by $n$ (we call this intersection a \emph{lense} and denote the set of $\mathbb{Z}^d$-lattice points in it by $\overline{\mathrm{L}}_\Lambda^\mathbb{Z}(n)$).
	
	We denote the compression of the Dirac operator by $D_\Lambda := P_\Lambda D P_\Lambda$.
	We apply an ``antiderivative trick'' (\autoref{lem:CompositionsInTermsOfCommutators}) to see that for obtaining estimates of the maps $\mathrm{id}_{\mathrm{C}^\infty(\mathbb{T}^d)} - \mathcal{F}_{\mathfrak{m}_\Lambda}$ and $\mathrm{id}_{\mathrm{C}(\mathbb{T}^d)^{(\Lambda)}} - \mathcal{S}_{\mathfrak{m}_\Lambda}$ in Lipschitz seminorm, one needs to estimate the following two maps:
	\begin{align}
		\F_{\mathfrak{w}_\Lambda} := \frac i2 \sum_{\mu=1}^d \F_{\mathfrak{w}^\mu_\Lambda} \otimes \{ \gamma^\mu,\cdot \}& : [D, \mathrm{C}^\infty(\T^d)] \to \mathrm{C}^\infty(\T^d);\\
		\S_{\mathfrak{w}_\Lambda} := \frac i2 \sum_{\mu=1}^d \S_{\mathfrak{w}^\mu_\Lambda} \otimes \{ \gamma^\mu,\cdot \}&: [D, \mathrm{C}(\T^d)^{(\Lambda)}] \to \mathrm{C}(\T^d)^{(\Lambda)},  
	\end{align}
	where $\mathcal{F}_{\mathfrak{w}_\Lambda^\mu}$ and $\mathcal{S}_{\mathfrak{w}_\Lambda^\mu}$ are now respectively Fourier and Schur multiplication with the symbol
	\begin{align}\label{eqn:Symbol_w}
		\mathfrak{w}_\Lambda^\mu (n) = 
		\begin{cases}
			0, \text{ if } n = 0 \\
			(1-\mathfrak{m}_\Lambda(n)) \frac{n_\mu}{\|n\|^2}, \text{ if } n \neq 0.
		\end{cases}
	\end{align}
	A variation of the classical Bo\.zejko-Fendler \emph{transference theorem} for Fourier and Schur multipliers (\autoref{lem:Sw-Fw-norms}) then shows that the cb-norm of $\S_{\mathfrak{w}_\Lambda}$ is bounded by the cb-norm of $\F_{\mathfrak{w}_\Lambda}$.
	Since the latter map takes values in a commutative $\mathrm{C}^*$-algebra its cb-norm coincides with its norm, so this is what is left to estimate.
	
	It is not hard to see that $\mathcal{F}_{\mathfrak{m}_\Lambda}$ is an approximate identity controlled in Lipschitz seminorm, if the convolution kernel $K_{\mathfrak{m}_\Lambda} = \widecheck{\mathfrak{m}}_\Lambda$ is a \emph{good kernel} (\autoref{lem:GoodKernelsGiveDiracEstimate}). 
	We show that this is indeed the case by exploiting the fact that $K_{\mathfrak{m}_\Lambda}$ is the square of the \emph{spherical Dirichlet kernel}, whence positive.
	Since this is analogous to the 1-dimensional case, we call $K_{\mathfrak{m}_\Lambda}$ the \emph{spectral Fejér kernel}.
	As outlined above, our main result that $(\rho_\Lambda, \sigma_\Lambda)$ is a $\mathrm{C}^1$-approximate order isomorphism and hence that spectral truncations of the $d$-torus converge for all $d \geq 1$ is now simply a corollary of the fact that the \emph{spectral Fejér kernel} is \emph{good}.
	
	In the last section, we give a computation of the propagation number of the operator system $\mathrm{C}(\mathbb{T}^d)^{(\Lambda)}$.
	We conclude by pointing out some obstacles on the road to determining its operator system dual.
	In particular, we argue that, for $d \geq 2$, the operator system dual cannot quite be the one that we would have expected.
	
	\bigskip
	
	\subsection*{Acknowledgements}
	We thank Nigel Higson for the comments at an early stage of this project.
	ML thanks Gerrit Vos for an introduction to Fourier and Schur multipliers and Dimitris Gerontogiannis for fruitful discussions.
	After having published a first preprint version of this article we were pointed to \autoref{prop:SpectralFejerKernel}, which makes many technicalities obsolete and at the same time enables us to prove convergence of spectral truncations of tori in any dimension (also higher than $3$). 
	We are indepted to Yvann Gaudillot-Estrada for this contribution. 
	Furthermore, we thank Jens Kaad and Tirthankar Bhattacharyya for their input.
	This work was funded by NWO under grant OCENW.KLEIN.376.

	\section{Preliminaries}
	
	\subsection{Actions and commutators}
	
	We spell out a few simple facts used throughout this article.
	Recall the usual action of $\mathrm{C}(\mathbb{T}^d)$ on $\mathcal{H}$:
	\begin{align}
		f (g \otimes v) = (fg) \otimes v = \sum_{n \in \mathbb{Z}^d}  \sum_{m \in \mathbb{Z}^d} \widehat{f}(n-m) \widehat{g}(m) e_n \otimes v
	\end{align}
	This induces an action of $\mathrm{C}(\mathbb{T}^d)^{(\Lambda)}$ on $P_\Lambda \mathcal{H}$:
	\begin{align}
		T \left( \sum_{k \in \overbar{\mathrm{B}}_\Lambda^\mathbb{Z}} a_k e_k \otimes v \right) = \sum_{k \in \overbar{\mathrm{B}}_\Lambda^\mathbb{Z}} \sum_{l \in \overbar{\mathrm{B}}_\Lambda^\mathbb{Z}} t_{k-l} a_l e_k \otimes v
	\end{align}
	Furthermore, we have the standard $\mathbb{T}^d$-action on $\mathrm{C}(\mathbb{T}^d)$ (as a subalgebra of $\mathcal{B}(\mathcal{H})$):
	\begin{align}
		\alpha_\theta(f) = \sum_{n \in \mathbb{Z}^d} \widehat{f}(n) e_{n} e^{i n \cdot \theta}
	\end{align}
	This induces an action of $\mathbb{T}^d$ on $\mathrm{C}(\mathbb{T}^d)^{(\Lambda)}$ (as an operator subsystem of $\mathcal{B}(P_\Lambda \mathcal{H})$):
	\begin{align}\label{eqn:TorusActionOnOperatorSystem}
		\alpha_\theta(T) = \left( t_{k-l} e^{i (k-l) \cdot \theta}\right)_{k,l \in \overbar{\mathrm{B}}_\Lambda^\mathbb{Z}}
	\end{align}
	
	Recall that the following holds:
	\begin{align}
		[D,f]
		= \sum_{\mu = 1}^d \sum_{n \in \mathbb{Z}^d} n_\mu \widehat{f}(n) e_n \otimes \gamma^\mu
	\end{align}
	Similarly, we have:
	\begin{align}\label{eqn:CompressedDiracCommutator}
		[D_\Lambda, T]
		= \sum_{\mu = 1}^d \left( (k_\mu - l_\mu) t_{k-l} \right)_{k,l \in \overbar{\mathrm{B}}_\Lambda^\mathbb{Z}} \otimes \gamma^\mu
	\end{align}

	\subsection{Good kernels}
	
	We follow the convention in \cite{SS03} and call an approximate identity in the Banach $\ast$-algebra $\mathrm{L}^1(\mathbb{T}^d)$ (with convolution) a \emph{good kernel}:
	
	\begin{definition}
		For all $\Lambda > 0$, let $K_\Lambda \in \mathrm{L}^1(\mathbb{T}^d)$.
		The family $\{K_\Lambda\}_{\Lambda > 0}$ is called a \emph{good kernel}, if the following holds:
		\begin{enumerate}
			\item $\int_{\mathbb{T}^d} K_\Lambda(x) dx = 1$ and
			\item for all $\delta > 0$, we have that $\int_{\mathbb{T}^d \setminus \mathrm{B}_\delta(0)} K_\Lambda(x) dx \rightarrow 0$, as $\Lambda \rightarrow \infty$.
		\end{enumerate}
		
	\end{definition}
	
	Good kernels provide a way to approximate the identity on $\mathrm{C}^\infty(\mathbb{T}^d)$ not only in $\mathrm{L}^1$- and $\sup$-norm, but also in Lipschitz seminorm:
	
	\begin{lemma}\label{lem:GoodKernelsGiveDiracEstimate}
		If $K_\Lambda$ is a \emph{good kernel}, then, for all $f \in \mathrm{C}^\infty(\mathbb{T}^d)$, the following holds:
		\begin{align}
			\|f - K_\Lambda \ast f\| \leq \gamma_\Lambda \|[D,f]\|,
		\end{align}
		where $\gamma_\Lambda \rightarrow 0$ as $\Lambda \rightarrow \infty$.
	\end{lemma}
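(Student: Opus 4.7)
The plan is to exploit the fact that $\|[D,f]\|$ controls the Lipschitz constant of $f$ on the flat torus, and then run the standard approximate-identity argument for good kernels, carefully choosing the localisation radius as a function of $\Lambda$.

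First I would unpack the commutator norm. Since $[D,f] = -i\sum_\mu (\partial_\mu f)\otimes \gamma^\mu$ and the Clifford relations $\{\gamma^\mu,\gamma^\nu\}=2\delta^{\mu\nu}$ give $\bigl\|\sum_\mu a_\mu\gamma^\mu\bigr\|_{\mathrm{End}(V)} = \bigl(\sum_\mu|a_\mu|^2\bigr)^{1/2}$, one obtains
\begin{align}
\|[D,f]\| \;=\; \sup_{x\in\mathbb{T}^d}|\nabla f(x)| \;=\; \mathrm{Lip}(f),
\end{align}
where the Lipschitz constant is taken with respect to the flat Riemannian metric $d_{\mathbb{T}^d}$. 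In particular, $|f(x)-f(x-y)| \leq \|[D,f]\|\, d_{\mathbb{T}^d}(x,x-y)$, and $d_{\mathbb{T}^d}(x,x-y)\leq |y|$ for $y$ in a small Euclidean ball around $0$, while it is uniformly bounded by the diameter $\pi\sqrt{d}$ otherwise.

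Next, using the normalisation $\int_{\mathbb{T}^d}K_\Lambda = 1$ I would write
\begin{align}
f(x)-(K_\Lambda * f)(x) = \int_{\mathbb{T}^d} K_\Lambda(y)\bigl(f(x)-f(x-y)\bigr)\,dy,
\end{align}
and split the domain of integration into $\mathrm{B}_\delta(0)$ and its complement for some $\delta>0$ to be chosen. The first part is bounded by $\delta\,\|[D,f]\|\,\|K_\Lambda\|_1$ and the second by $\pi\sqrt{d}\,\|[D,f]\|\int_{\mathbb{T}^d\setminus\mathrm{B}_\delta(0)}|K_\Lambda(y)|\,dy$. Taking the supremum over $x$ yields
\begin{align}
\|f-K_\Lambda*f\| \;\leq\; \|[D,f]\|\left(\delta\,\|K_\Lambda\|_1 + \pi\sqrt{d}\int_{\mathbb{T}^d\setminus \mathrm{B}_\delta(0)}|K_\Lambda(y)|\,dy\right).
\end{align}

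Finally, I would define $\gamma_\Lambda$ as the infimum over $\delta>0$ of the parenthesised quantity. A standard diagonal argument—for each $n$ pick $\Lambda_n$ so that the tail integral with $\delta=1/n$ is below $1/n$ for all $\Lambda\geq \Lambda_n$, then set $\delta_\Lambda=1/n$ for $\Lambda\in[\Lambda_n,\Lambda_{n+1})$—shows $\gamma_\Lambda\to 0$. The only subtlety is that the definition of good kernel in the paper requires $\int K_\Lambda = 1$ but not a uniform $L^1$ bound; however, in the application $K_\Lambda$ is the spectral Fejér kernel, which is nonnegative, so $\|K_\Lambda\|_1 = 1$ automatically and both occurrences of $|K_\Lambda|$ can be replaced by $K_\Lambda$. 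I expect no genuine obstacle beyond this bookkeeping: the essential content is the identification of $\|[D,f]\|$ with the Lipschitz seminorm, which converts the good-kernel property into an estimate in Lipschitz rather than $L^\infty$-norm.
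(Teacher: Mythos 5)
Your proposal is correct and follows essentially the same route as the paper's proof: identify $\|[D,f]\|$ with the Lipschitz seminorm, use $\int K_\Lambda=1$ to write $f-K_\Lambda\ast f$ as an integral against $K_\Lambda(y)\bigl(f(x)-f(x-y)\bigr)$, and split the integral into a small ball around $0$ and its complement, with the good-kernel concentration property killing the far part. Your side remark about the missing uniform $\mathrm{L}^1$ bound is apt --- the paper's own proof also invokes $C=\sup_\Lambda\|K_\Lambda\|_{\mathrm{L}^1}<\infty$, which is likewise not part of the stated definition but holds in the application because the spectral Fej\'er kernel is positive and normalised.
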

	
	\begin{proof}
		The proof is as in \cite[Lemma 5.13]{Ber19}.
		For all $x \in \mathbb{T}^d$, we have:
		\begin{align}
			|K_\Lambda \ast f(x) - f(x)|
			&\leq \int_{\mathbb{T}^d} | K_\Lambda(y) (f(x-y)-f(x)) | dy \\
			&\leq \int_{\mathbb{T}^d} |K_\Lambda(y)| \|y\| \|f\|_{\text{Lip}} dy \\
			&= \int_{\mathbb{T}^d} |K_\Lambda(y)| \|y\| dy \|[D,f]\|
		\end{align}
		We set $\gamma_\Lambda := \int_{\mathbb{T}^d} |K_\Lambda(y)| \|y\| dy$.
		Let $\varepsilon > 0$.
		Let $\Lambda_0$ be large enough such that, for all $\Lambda \geq \Lambda_0$, $\int_{\|y\| \geq \varepsilon} |K_\Lambda(y)| dy < \varepsilon$.
		Then, for $\Lambda \geq \Lambda_0$, we obtain:
		\begin{align}
			\gamma_\Lambda
			&= \int_{\|y\| \geq \varepsilon} |K_\Lambda(y)| \|y\| dy + \int_{\|y\| < \varepsilon} |K_\Lambda(y)| \|y\| dy \\
			&\leq d \left( \int_{\|y\| \geq \varepsilon} |K_\Lambda(y)| dy + \varepsilon \int_{\|y\| < \varepsilon} |K_\Lambda(y)| dy \right) \\
			&\leq d \left( \varepsilon + \varepsilon C \right),
		\end{align}
		where $C = \sup_{\Lambda > 0} \int_{\mathbb{T}^d} |K_\Lambda(y)| dy < \infty$.
	\end{proof}

	\subsection{Fourier and Schur multipliers}
	
	Let $\Gamma$ be a discrete group and let $\lambda : \Gamma \rightarrow \mathcal{B}(\ell^2(\Gamma))$ be its left-regular representation given by $\lambda_g f (h) = f(gh)$.
	We denote by $\mathrm{C}_\lambda^\ast(\Gamma)$ the reduced group $\mathrm{C}^\ast$-algebra, i.e.\@ the completion of the group ring $\mathbb{C}[\Gamma]$ in $\mathcal{B}(\ell^2(\Gamma))$ with respect to the norm $\|x\|_\mathrm{red} := \|\lambda(x)\|_{\mathcal{B}(\ell^2(\Gamma))}$.
	A function $\varphi : \Gamma \rightarrow \mathbb{C}$ gives rise to a \emph{multiplier} on the group ring as follows:
	\begin{align}
		\mathbb{C}[\Gamma] &\rightarrow \mathbb{C}[\Gamma] \\
		\sum_{g \in \Gamma} a_g g &\mapsto \sum_{g \in \Gamma} \varphi(g) a_g g
	\end{align}
	If this map extends to a bounded linear map $\mathcal{M}_\varphi : \mathrm{C}_\lambda^\ast(\Gamma) \rightarrow \mathrm{C}_\lambda^\ast(\Gamma)$ we call this extension the \emph{multiplier on $\mathrm{C}_\lambda^\ast(\Gamma)$ with symbol $\varphi$}.
	We record the obvious fact that if $\varphi$ is finitely supported it always induces a multiplier on $\mathrm{C}_\lambda^\ast(\Gamma)$.
	
	Recall that if $\Gamma$ is abelian, then $\mathrm{C}_\lambda^\ast(\Gamma) = \mathrm{C}^\ast(\Gamma) \cong \mathrm{C}(\widehat{\Gamma})$, where $\widehat{\Gamma}$ is the Pontryagin dual of $\Gamma$.
	In this case, we call the multiplier on $\mathrm{C}(\widehat{\Gamma})$ the \emph{Fourier multiplier with symbol $\varphi$} and denote it by $\mathcal{F}_\varphi$.
	The Fourier multiplier takes on the following form:
	\begin{align}\label{eqn:FourierMultiplierOnAbelianGroup}
		\mathcal{F}_\varphi(f) 
		= \left(g \mapsto \varphi(g) \widehat{f}(g)\right)\widecheck{\vphantom{f}}
		= \widecheck{\varphi} \ast f
	\end{align}
	See e.g.\@ \cite[Chapter 6]{HR94} for the relevant Fourier theory of locally compact abelian groups.
	
	Let $k : \Gamma \times \Gamma \rightarrow \mathbb{C}$ be a function, also called a \emph{kernel}.
	A kernel $k$ gives rise to a linear map with domain $\mathcal{B}(\ell^2(\Gamma))$ given by $\mathcal{S}_k : (t_{g,h})_{g,h \in \Gamma} \mapsto (k(g,h)t_{g,h})_{g,h \in \Gamma}$, where $t_{g,h} = \langle  \delta_g, T \delta_h \rangle$, for $T \in \mathcal{B}(\ell^2(\Gamma))$.
	If this map is bounded with range in $\mathcal{B}(\ell^2(\Gamma))$, we call it a \emph{Schur multiplier}.
	See e.g.\@ \cite{TT10} for a survey and \cite[Chapter 5]{Pis95} as a standard reference which includes a discussion of the connection with Grothendieck's theorem.
	We collect some well-known facts about Schur multipliers.
	
	\begin{proposition}\label{prop:CharacterizationsSchurMultiplier}
		Let $k : \Gamma \times \Gamma \rightarrow \mathbb{C}$ be a kernel.
		Then the following are equivalent:
		\begin{enumerate}[label=(\roman*)]
			\item $\mathcal{S}_k$ is a Schur multiplier of norm $\|\mathcal{S}_k\| \leq 1$.
			\item $\mathcal{S}_k$ is a completely bounded Schur multiplier of $\mathrm{cb}$-norm $\|\mathcal{S}_k\|_\mathrm{cb} \leq 1$.
			\item There exists a Hilbert space $\mathcal{H}$ and families of vectors $\{\xi_g\}_{g \in \Gamma}, \{\eta_h\}_{h \in \Gamma} \subset \mathcal{H}$ with $\|\xi_g\|, \|\eta_h\| \leq 1$ such that $k(g,h) = \langle \xi_g, \eta_h \rangle$, for all $g, h \in \Gamma$.
		\end{enumerate}
	\end{proposition}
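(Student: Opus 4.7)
The plan is to run the cycle $(\mathrm{iii})\Rightarrow(\mathrm{ii})\Rightarrow(\mathrm{i})\Rightarrow(\mathrm{iii})$; the middle implication is immediate because the cb-norm dominates the ordinary norm.

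For $(\mathrm{iii})\Rightarrow(\mathrm{ii})$ I would produce an explicit Stinespring-type factorization. Given vectors $\xi_g,\eta_h\in\mathcal{H}$ of norm at most $1$ with $k(g,h)=\langle\xi_g,\eta_h\rangle$, I would define contractions $A,B\colon\ell^2(\Gamma)\to\ell^2(\Gamma)\otimes\mathcal{H}$ on the canonical basis by
\begin{equation*}
A\delta_g=\delta_g\otimes\xi_g,\qquad B\delta_h=\delta_h\otimes\eta_h.
\end{equation*}
A direct computation of matrix entries yields
\begin{equation*}
\langle\delta_g,A^\ast(T\otimes 1_{\mathcal{H}})B\,\delta_h\rangle=t_{g,h}\langle\xi_g,\eta_h\rangle=k(g,h)\,t_{g,h},
\end{equation*}
so $\mathcal{S}_k(T)=A^\ast(T\otimes 1_{\mathcal{H}})B$ for every $T\in\mathcal{B}(\ell^2(\Gamma))$. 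Since amplification $T\mapsto T\otimes 1_{\mathcal{H}}$ is a $\ast$-homomorphism and conjugation by the contractions $A^\ast,B$ is completely contractive, this factorization forces $\|\mathcal{S}_k\|_{\mathrm{cb}}\leq 1$.

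The substantive direction is $(\mathrm{i})\Rightarrow(\mathrm{iii})$, the Grothendieck-Haagerup characterization of Schur multipliers. The plan is to reduce to finite subsets $F\subset\Gamma$: by a net compactness argument over the directed system of finite subsets of $\Gamma$, uniform representations $k|_{F\times F}(g,h)=\langle\xi_g^F,\eta_h^F\rangle$ on each finite block patch together, via weak-$\ast$ limits of their Gram matrices together with a GNS-style construction, to a global representation as in $(\mathrm{iii})$. For a fixed finite $F$, the norm of $\mathcal{S}_k\colon M_{|F|}\to M_{|F|}$ controls, by Grothendieck's inequality applied to the bilinear form $(x,y)\mapsto\sum_{g,h\in F}k(g,h)x_gy_h$ on $\ell^\infty(F)\times\ell^\infty(F)$, the Haagerup tensor norm of $k$ in $\ell^\infty(F)\otimes_h\ell^\infty(F)$; this latter norm is exactly the infimum of $\sup_g\|\xi_g\|\cdot\sup_h\|\eta_h\|$ over representations of the form required in $(\mathrm{iii})$. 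A sharpening via Wittstock's extension theorem is then needed to eliminate the Grothendieck constant and obtain the sharp bound $1$.

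The main obstacle is precisely this automatic complete boundedness $\|\mathcal{S}_k\|=\|\mathcal{S}_k\|_{\mathrm{cb}}$, which is the deep content of the proposition and ultimately rests on Grothendieck's theorem. Since the paper subsequently uses only the easy directions---$(\mathrm{iii})\Rightarrow(\mathrm{ii})$ to construct Schur multipliers and $(\mathrm{ii})\Rightarrow(\mathrm{i})$ to control ordinary norms---I would present the explicit factorization above in detail and refer to \cite[Chapter 5]{Pis95} and \cite{TT10} for $(\mathrm{i})\Rightarrow(\mathrm{iii})$.
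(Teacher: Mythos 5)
Your write-up of $(\mathrm{iii})\Rightarrow(\mathrm{ii})\Rightarrow(\mathrm{i})$ coincides with the paper's: the factorization $\mathcal{S}_k(T)=V^\ast(T\otimes\mathbf{1}_{\mathcal{H}})W$ through the amplification, with $V\delta_g=\delta_g\otimes\xi_g$ and $W\delta_h=\delta_h\otimes\eta_h$, is exactly the argument sketched after the proposition, and, like the paper, you delegate the remaining implication to the literature. For the part that is actually written out in detail, the two proofs agree.

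The route you sketch for $(\mathrm{i})\Rightarrow(\mathrm{iii})$, however, breaks at its first link. The Schur multiplier norm of $\mathcal{S}_k$ on $M_{|F|}$ does not control the norm of the bilinear form $(x,y)\mapsto\sum_{g,h\in F}k(g,h)x_gy_h$ on $\ell^\infty(F)\times\ell^\infty(F)$: for $k\equiv 1$ the multiplier is the identity, of norm $1$, while that bilinear form equals $\bigl(\sum_g x_g\bigr)\bigl(\sum_h y_h\bigr)$ and has norm $|F|^2$. Grothendieck's inequality takes as input boundedness on $\ell^\infty\times\ell^\infty$ --- a condition strictly stronger than (iii) itself, which by Cauchy--Schwarz only yields boundedness on $\ell^1\times\ell^1$ --- so it cannot be fed the hypothesis (i); and Wittstock's extension theorem is not a device for removing the constant $K_G$. (Note also that the asserted identity between $\|\mathcal{S}_k\|$ and the Haagerup tensor norm of $k$ in $\ell^\infty(F)\otimes_h\ell^\infty(F)$ is precisely the equivalence $(\mathrm{i})\Leftrightarrow(\mathrm{iii})$ being proved.) The arguments that give the sharp constant $1$ are the ones the paper cites: Smith's lemma that a bounded $\ell^\infty(\Gamma)$-bimodule map on $\mathcal{B}(\ell^2(\Gamma))$ is automatically completely bounded with the \emph{same} norm, which is \cite[Theorem 8.7 and Corollary 8.8]{Pau02} and yields $(\mathrm{i})\Rightarrow(\mathrm{ii})$ with no loss, followed by the Wittstock (Haagerup--Paulsen) factorization $\mathcal{S}_k=V^\ast\pi(\cdot)W$ of the resulting complete contraction, from which one extracts $\xi_g:=\pi(E_{g_0,g})V\delta_g$ and $\eta_h:=\pi(E_{g_0,h})W\delta_h$ for a fixed basepoint $g_0$; alternatively, a Hahn--Banach separation against the unit ball of the Haagerup tensor product as in \cite[Theorem 5.1]{Pis95}. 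In neither proof does Grothendieck's constant appear: the automatic complete boundedness here is a feature of bimodule maps over a maximal abelian subalgebra, not a consequence of Grothendieck's theorem.
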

	
	For an elementary proof of the equivalence of (i) and (ii), we refer to \cite[Theorem 8.7 and Corollary 8.8]{Pau02}.
	A proof of the equivalence of (ii) and (iii) can be found e.g.\@ in \cite[Theorem D.4]{BO08}, which we now sketch:
	Assuming that $\|\mathcal{S}_k\|_{\mathrm{cb}} \leq 1$, Wittstock's factorization theorem gives a factorization of $\mathcal{S}_k$ through $\mathcal{B}(\mathcal{H})$, for some Hilbert space $\mathcal{H}$, which allows to construct appropriate $\xi_g$ and $\eta_h$.
	For the converse implication, the map $\mathcal{S}_k$ is factorized through $\mathcal{B}(\ell^2(\Gamma) \otimes \mathcal{H})$ as $\mathcal{S}_k(T) = V^\ast (T \otimes \mathbf{1}_\mathcal{H}) W$, for the contractions $V \delta_g := \delta_g \otimes \xi_g$ and $W \delta_h := \delta_h \otimes \eta_h$.
	The same works when tensoring with $\mathbf{1}_{M_n}$, for arbitrary $n \in \mathbb{N}$, which shows complete contractivity of $\mathcal{S}_k$.
	
	We are mainly interested in Schur multipliers $\mathcal{S}_k$ induced by a function $\varphi : \Gamma \rightarrow \mathbb{C}$, i.e.\@ $k(g,h) := \varphi(gh^{-1})$.
	We call such a Schur multiplier a \emph{Schur multiplier with symbol $\varphi$} and slightly abuse notation to denote it by $\mathcal{S}_\varphi$.
	It is easy to see that $\mathcal{S}_\varphi\big|_{\mathrm{C}_\lambda^\ast(\Gamma)} = \mathcal{M}_\varphi$.
	Indeed, let $f \in \mathrm{C}_\lambda^\ast(\Gamma)$ and $\{\delta_g\}_{g \in \Gamma}$ be an orthonormal basis for $\ell^2(\Gamma)$.
	Then the matrix associated to $f$ (viewed as an element of $\mathcal{B}(\ell^2(\Gamma))$) is a Toeplitz matrix in the following sense:
	\begin{align}
		\langle \delta_g, f \delta_h \rangle
		= \langle \delta_g , \sum_{\gamma \in \Gamma} f_\gamma \lambda_\gamma(\delta_h) \rangle 
		=  \sum_{\gamma \in \Gamma} \langle \delta_g , f_\gamma \delta_{\gamma h} \rangle 
		= f_{gh^{-1}}
	\end{align}
	It follows that the matrix associated to $\mathcal{M}_\varphi(f)$ is the following Toeplitz matrix:
	\begin{align}
		\langle \delta_g, \mathcal{M}_\varphi(f) \delta_h \rangle
		= \varphi(gh^{-1})f_{gh^{-1}},
	\end{align}
	which shows that $\mathcal{S}_\varphi((f_{gh^{-1}})_{g,h \in \Gamma}) = ((\mathcal{M}_\varphi(f))_{g,h})_{g,h \in \Gamma}$.

	\section{Convergence of Spectral Truncations of the $d$-Torus}
	
	The goal of this section is to prove that the maps $\rho_\Lambda : \mathrm{C}^\infty(\mathbb{T}^d) \rightarrow \mathrm{C}(\mathbb{T}^d)^{(\Lambda)}$, given by the compression $\rho_\Lambda(f) := P_\Lambda f P_\Lambda$, and $\sigma_\Lambda : \mathrm{C}(\mathbb{T}^d)^{(\Lambda)} \rightarrow \mathrm{C}^\infty(\mathbb{T}^d)$, given by $\sigma_\Lambda (T) := \frac{1}{\mathcal{N}_\mathrm{B}(\Lambda)} \mathrm{Tr}(\ket{\psi}\bra{\psi}\alpha(T))$, form a $\mathrm{C}^1$-approximate order isomorphism in the sense of Definition \ref{defn:C1-approx}.

	\subsection{A candidate for the $\mathrm{C}^1$-approximate order isomorphism}
	
	We begin by checking unitality, positivity and $\mathrm{C}^1$-contractivity for the maps $\rho_\Lambda$ and $\sigma_\Lambda$.
	
	\begin{lemma}\label{lem:PropertiesOfTheCompressionMap}
		The map $\rho_\Lambda : \mathrm{C}^\infty(\mathbb{T}^d) \rightarrow \mathrm{C}(\mathbb{T}^d)^{(\Lambda)}$ is unital, positive and $\mathrm{C}^1$-contractive.
	\end{lemma}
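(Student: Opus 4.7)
The plan is to verify the four properties---unitality, positivity, norm-contractivity, and contractivity with respect to the Lipschitz seminorm---directly from the definition $\rho_\Lambda(f) = P_\Lambda f P_\Lambda$ and elementary properties of the orthogonal projection $P_\Lambda$.

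First, unitality is immediate: $\rho_\Lambda(1) = P_\Lambda P_\Lambda = P_\Lambda$, which is by definition the unit of $\mathrm{C}(\mathbb{T}^d)^{(\Lambda)}$. Positivity follows from the standard fact that compression by a projection is (completely) positive: if $f \geq 0$, say $f = g^\ast g$, then $\rho_\Lambda(f) = (gP_\Lambda)^\ast(gP_\Lambda) \geq 0$. Contractivity with respect to the operator norm is the elementary estimate $\|P_\Lambda f P_\Lambda\| \leq \|f\|$, since $\|P_\Lambda\| = 1$.

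Contractivity with respect to the Lipschitz seminorm is the only property requiring a short calculation. The key observation is that $P_\Lambda$ commutes with $D$, since it is the spectral projection of $D$ onto the eigenspaces with eigenvalues of modulus at most $\Lambda$. Consequently $DP_\Lambda = P_\Lambda D = P_\Lambda D P_\Lambda = D_\Lambda P_\Lambda$, so that for any $f \in \mathrm{C}^\infty(\mathbb{T}^d)$ one can pull the outer projections through the commutator and obtain
\begin{align*}
[D_\Lambda, \rho_\Lambda(f)] = [P_\Lambda D P_\Lambda, P_\Lambda f P_\Lambda] = P_\Lambda [D,f] P_\Lambda,
\end{align*}
whence $\|[D_\Lambda, \rho_\Lambda(f)]\| \leq \|[D,f]\|$. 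No step here presents a genuine obstacle: the entire lemma reduces to unpacking the definitions and exploiting that $P_\Lambda$ is a spectral projection of $D$.
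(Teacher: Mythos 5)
Your proof is correct and follows essentially the same route as the paper's: direct verification of each property, with the key point being that $P_\Lambda$ commutes with $D$, giving $[D_\Lambda,\rho_\Lambda(f)] = P_\Lambda[D,f]P_\Lambda$. The only cosmetic differences are that you establish positivity via the factorization $f = g^\ast g$ rather than via the quadratic form, and norm-contractivity via the elementary bound $\|P_\Lambda f P_\Lambda\| \leq \|P_\Lambda\|\,\|f\|\,\|P_\Lambda\|$ rather than the paper's Plancherel estimate; both are fine.
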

	
	\begin{proof}
		Unitality is clear as 
		$P_\Lambda \mathbf{1} P_\Lambda \left(\sum_{n \in \overline{\mathrm{B}}_\Lambda^\mathbb{Z}} a_{n} e_{n} \otimes s_n\right) 
		= \sum_{n \in \overline{\mathrm{B}}_\Lambda^\mathbb{Z}} a_{n} e_{n} \otimes s_n$, 
		for all $\sum_{n \in \overline{\mathrm{B}}_\Lambda^\mathbb{Z}} a_{n} e_{n} \otimes s_n \in P_\Lambda \mathcal{H}$.
		Also positivity is obvious since $\langle PaP\varphi, P\varphi \rangle_{P\mathcal{K}} = \langle a P\varphi, P\varphi \rangle_{\mathcal{K}} \geq 0$, for any Hilbert space $\mathcal{K}$, any projection $P \in \mathcal{B}(\mathcal{K})$ and any positive operator $a \in \mathcal{B}(\mathcal{K})_+$.	
		Contractivity in norm follows from Plancherel's theorem:
		\[
		\|P_\Lambda f P_\Lambda\|^2 
		= \sup_{\underset{\|\varphi\| \leq 1}{\varphi \in P_\Lambda \mathcal{H}}} \|P_\Lambda f P_\Lambda \varphi\|^2 
		\leq \displaystyle{\sum_{n \in \overline{\mathrm{B}}_\Lambda^\mathbb{Z}}} \left|\widehat{f}(n)\right|^2 \leq  \sum_{n \in \mathbb{Z}^d} \left|\widehat{f}(n)\right|^2 
		= \|f\|^2
		\]
		For contractivity in Lipschitz seminorm, we first observe that $\rho_\Lambda$ commutes with $[D,\cdot]$ in the following sense, which is an immediate consequence of the fact that $D$ commutes with $P_\Lambda$:
		\begin{align}
			[D_\Lambda, \rho_\Lambda(f)]
			= P_\Lambda [D,f] P_\Lambda
		\end{align}
		This immediately gives $\|[D,\cdot]\|$-contractivity:
		\begin{align}
			\|[D_\Lambda,\rho_\Lambda(f)]\|
			= \| P_\Lambda [D,f] P_\Lambda \|
			\leq \| [D,f] \|
		\end{align}
		\black
	\end{proof}
	
	\begin{lemma}\label{lem:PropertiesOfTheUpwardsMap}
		The map $\sigma_\Lambda : \mathrm{C}(\mathbb{T}^d)^{(\Lambda)} \rightarrow \mathrm{C}^\infty(\mathbb{T}^d)$ is unital, positive and $\mathrm{C}^1$-contractive.
	\end{lemma}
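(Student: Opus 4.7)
The plan is to derive all three properties from a single structural observation: at each point $x \in \mathbb{T}^d$, the functional $T \mapsto \sigma_\Lambda(T)(x)$ is a state on $\mathrm{C}(\mathbb{T}^d)^{(\Lambda)}$. Indeed, since $\|\psi\|^2 = \mathcal{N}_\mathrm{B}(\Lambda)$, one may rewrite
\[
\sigma_\Lambda(T)(x) = \frac{1}{\mathcal{N}_\mathrm{B}(\Lambda)} \langle \psi, \alpha_x(T)\psi\rangle,
\]
exhibiting $\mathrm{ev}_x \circ \sigma_\Lambda$ as the composition of the $\ast$-automorphism $\alpha_x$ of $\mathcal{B}(P_\Lambda \mathcal{H})$ (see (\ref{eqn:TorusActionOnOperatorSystem})) with a vector state. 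This immediately gives unitality, positivity, and in fact complete positivity of $\sigma_\Lambda$; smoothness of $\sigma_\Lambda(T)$ as a function on $\mathbb{T}^d$ is clear from the finite trigonometric polynomial expression computed in the introduction. Norm contractivity is then automatic, since a unital positive map between operator systems is contractive.

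It remains to establish Lipschitz-seminorm contractivity, $\|[D,\sigma_\Lambda(T)]\| \leq \|[D_\Lambda,T]\|$. The key step I would carry out is an intertwining relation between the two derivations. Using the explicit formula for the $\mathbb{T}^d$-action on the operator system and differentiating under the trace, one finds
\[
\partial_\mu \sigma_\Lambda(T) = i\,\sigma_\Lambda\!\left(\bigl((k_\mu - l_\mu)\,t_{k-l}\bigr)_{k,l\in \overbar{\mathrm{B}}_\Lambda^\mathbb{Z}}\right).
\]
Combined with $D = -i\sum_\mu \partial_\mu \otimes \gamma^\mu$ and the formula (\ref{eqn:CompressedDiracCommutator}) for $[D_\Lambda, T]$, this yields the clean intertwining
\[
[D,\sigma_\Lambda(T)] = (\sigma_\Lambda \otimes \mathrm{id}_{\mathrm{End}(V)})\bigl([D_\Lambda,T]\bigr).
\]
Because $\sigma_\Lambda$ is unital and completely positive, the tensor product $\sigma_\Lambda \otimes \mathrm{id}_{\mathrm{End}(V)}$ is a unital positive map between the operator systems $\mathrm{C}(\mathbb{T}^d)^{(\Lambda)} \otimes \mathrm{End}(V)$ and $\mathrm{C}(\mathbb{T}^d) \otimes \mathrm{End}(V)$, hence contractive; applying it to both sides of the intertwining relation gives the desired inequality.

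The only mildly delicate point is precisely the need for \emph{complete} positivity (rather than just positivity) of $\sigma_\Lambda$, forced by the appearance of the Clifford factor $\mathrm{End}(V)$ in the Dirac commutator. This might look like an obstacle, but it comes for free from the state-and-automorphism description of $\sigma_\Lambda$ above. With that in hand, the proof reduces entirely to the computational verification of the intertwining relation, which is routine Fourier bookkeeping analogous to the computation that was already carried out for $\rho_\Lambda$ in \autoref{lem:PropertiesOfTheCompressionMap}.
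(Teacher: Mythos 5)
Your proof is correct and follows essentially the same route as the paper's: pointwise evaluation realizes $\sigma_\Lambda$ as a family of vector states twisted by the automorphisms $\alpha_x$ (yielding unitality, positivity and norm contractivity), and the intertwining relation $[D,\sigma_\Lambda(T)] = (\sigma_\Lambda\otimes\mathbf{1})([D_\Lambda,T])$ handles the Lipschitz seminorm exactly as in the paper. One caveat: the blanket claim that a unital positive map between operator systems is contractive is false in general (the norm of a positive map can be as large as twice the norm of the image of the unit); here it is harmless because each $\mathrm{ev}_x\circ\sigma_\Lambda$ is a state, hence contractive, and because positivity of a map into the commutative $\mathrm{C}^*$-algebra $\mathrm{C}(\mathbb{T}^d)$ upgrades automatically to complete positivity --- which is precisely what you need (and what the paper leaves implicit) for the contractivity of $\sigma_\Lambda\otimes\mathrm{id}_{\mathrm{End}(V)}$ in the final step.
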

	
	\begin{proof}
		Unitality is clear as, for all $x \in \mathbb{T}^d$, we have:
		\begin{align}
			\sigma_\Lambda(\mathbf{1})(x)
			= \frac{1}{\mathcal{N}_{\mathrm{B}}(\Lambda)} \mathrm{Tr} \left( \ket{\psi} \bra{\psi} \alpha_x(\mathbf{1}) \right)
			= \frac{1}{\mathcal{N}_{\mathrm{B}}(\Lambda)} \mathrm{Tr} \left( \ket{\psi} \bra{\psi} \mathbf{1} \right)
			= 1
		\end{align}
		Positivity is also immediate from the definition.
		Namely, let $T \in \mathrm{C}(\mathbb{T}^d)^{(\Lambda)}_+$ be a positive operator on $P_\Lambda \mathcal{H}$ and let $P_\Lambda \mathcal{H} \ni \zeta \mapsto Q_T(\zeta) := \langle \zeta, T \zeta \rangle$ be its associated quadratic form.
		For $\zeta = \sum_{n \in \overbar{\mathrm{B}}_\Lambda^\mathbb{Z}} e_n$, we obtain:
		\begin{align}
			0
			\leq \frac{1}{\mathcal{N}_{\mathrm{B}}(\Lambda)} Q_T(\zeta)(x)
			= \frac{1}{\mathcal{N}_{\mathrm{B}}(\Lambda)} \mathrm{Tr} \left( \ket{\psi} \bra{\psi} \left(e_{-(m-n)} t_{m-n}\right)_{m,n} \right)
			= \sigma_\Lambda(T)(x),
		\end{align}
		for all $x \in \mathbb{T}^d$.
		For contractivity, we compute:
		\begin{align}
			| \sigma_\Lambda (T)(x) | &\leq \frac{1}{\mathcal{N}_{\mathrm{B}}(\Lambda)} \Tr  (\ket{\psi} \bra{\psi}) \|  \alpha_x (T) \| = \| T \|  
		\end{align}
		For contractivity in Lipschitz seminorm, we first observe that $\sigma_\Lambda$ commutes with $[D,\cdot]$ in the following sense, which is an easy consequence of (\ref{eqn:CompressedDiracCommutator}):
		\begin{align}
			[D, \sigma_\Lambda(T)] 
			&= -i \sum_{\mu = 1}^d \frac{1}{\mathcal{N}_\mathrm{B}(\Lambda)} \sum_{n \in \overbar{\mathrm{B}}_\Lambda^\mathbb{Z}} i n_\mu t_{n} e_{n} \otimes \gamma^\mu \\
			&= \sum_{\mu = 1}^d \frac{1}{\mathcal{N}_\mathrm{B}(\Lambda)} \mathrm{Tr} \left( \ket{\psi}\bra{\psi} \alpha \left( ((n_\mu-m_\mu) t_{n-m})_{n,m \in \overbar{\mathrm{B}}_\Lambda^\mathbb{Z}} \right) \right) \otimes \gamma^\mu \\
			&= \sigma_\Lambda \otimes \mathbf{1} ([D_\Lambda, T])
		\end{align}
		This immediately gives $\|[D,\cdot]\|$-contractivity:
		\begin{align}
			\| [D, \sigma_\Lambda(T)] \| 
			= \| \sigma_\Lambda \otimes \mathbf{1} \left( [D_\Lambda, T] \right) \|
			\leq \| [D_\Lambda, T] \|
		\end{align}
		\black
	\end{proof}
	
	We now compute the compositions $\sigma_\Lambda \circ \rho_\Lambda$ and $\rho_\Lambda \circ \sigma_\Lambda$:
	
	\begin{lemma}\label{lem:ComputationOfCompositions}
		The two compositions  $\sigma_\Lambda \circ \rho_\Lambda : \mathrm{C}^\infty(\mathbb{T}^d) \rightarrow \mathrm{C}^\infty(\mathbb{T}^d)$ and $\rho_\Lambda \circ \sigma_\Lambda : \mathrm{C}(\mathbb{T}^d)^{(\Lambda)} \rightarrow \mathrm{C}(\mathbb{T}^d)^{(\Lambda)}$ are given respectively by the Fourier multiplier and by the Schur multiplier with the symbol $\mathfrak{m}_\Lambda$:
		\begin{align}
			\sigma_\Lambda \circ \rho_\Lambda &= \mathcal{F}_{\mathfrak{m}_\Lambda} \\
			\rho_\Lambda \circ \sigma_\Lambda &= \mathcal{S}_{\mathfrak{m}_\Lambda}
		\end{align}
	\end{lemma}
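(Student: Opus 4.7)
The plan is to prove both identities by unfolding the definitions and then applying one combinatorial rearrangement. The core observation is that when one sums $e^{i(m-n)\cdot x}$ over pairs $(m,n) \in \overline{\mathrm{B}}_\Lambda^\mathbb{Z} \times \overline{\mathrm{B}}_\Lambda^\mathbb{Z}$, each difference $p = m - n$ occurs with multiplicity equal to $\#\{n \in \overline{\mathrm{B}}_\Lambda^\mathbb{Z} : n + p \in \overline{\mathrm{B}}_\Lambda^\mathbb{Z}\} = \#(\overline{\mathrm{B}}_\Lambda \cap (\overline{\mathrm{B}}_\Lambda - p) \cap \mathbb{Z}^d) = \mathcal{N}_\mathrm{L}(\Lambda,p)$, which is precisely the numerator of $\mathfrak{m}_\Lambda$.

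For the first identity, I would begin with $\rho_\Lambda(f) = P_\Lambda f P_\Lambda$, whose matrix entries with respect to the basis $\{e_k\}_{k \in \overline{\mathrm{B}}_\Lambda^\mathbb{Z}}$ are $t_{k-l} = \widehat{f}(k-l)$. Inserting this into the explicit formula
\[
  \sigma_\Lambda(T)(x) = \frac{1}{\mathcal{N}_\mathrm{B}(\Lambda)} \sum_{m, n \in \overline{\mathrm{B}}_\Lambda^\mathbb{Z}} t_{m-n} \, e^{i(m-n)\cdot x}
\]
derived in the introduction yields a double sum $\frac{1}{\mathcal{N}_\mathrm{B}(\Lambda)} \sum_{m,n} \widehat{f}(m-n) e^{i(m-n)\cdot x}$. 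Reindexing by $p := m - n$ and using the multiplicity count above converts this to $\sum_{p \in \mathbb{Z}^d} \mathfrak{m}_\Lambda(p) \widehat{f}(p) e^{ip\cdot x}$, where the sum is effectively finite since $\mathfrak{m}_\Lambda(p)$ vanishes for $\|p\| > 2\Lambda$. This is exactly $\mathcal{F}_{\mathfrak{m}_\Lambda}(f)$ by \eqref{eqn:FourierMultiplierOnAbelianGroup}.

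For the second identity, the same explicit formula for $\sigma_\Lambda(T)$ immediately reads off the Fourier coefficient $\widehat{\sigma_\Lambda(T)}(p) = \mathfrak{m}_\Lambda(p) \, t_p$. Now $\rho_\Lambda$ applied to any function $g \in \mathrm{C}^\infty(\mathbb{T}^d)$ gives a matrix with entries $\widehat{g}(k - l)$ for $k, l \in \overline{\mathrm{B}}_\Lambda^\mathbb{Z}$; applying this to $g = \sigma_\Lambda(T)$ yields the matrix $(\mathfrak{m}_\Lambda(k-l)\, t_{k-l})_{k,l \in \overline{\mathrm{B}}_\Lambda^\mathbb{Z}}$, which is by definition $\mathcal{S}_{\mathfrak{m}_\Lambda}(T)$.

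There is no real obstacle here; the statement is essentially a bookkeeping exercise. The only point that warrants care is the multiplicity count identifying the number of pairs $(m,n) \in \overline{\mathrm{B}}_\Lambda^\mathbb{Z} \times \overline{\mathrm{B}}_\Lambda^\mathbb{Z}$ with $m - n = p$ as $\mathcal{N}_\mathrm{L}(\Lambda,p)$; this is the place where the specific choice of $\ket{\psi} = \sum_{n \in \overline{\mathrm{B}}_\Lambda^\mathbb{Z}} e_n$ together with the averaging over the $\mathbb{T}^d$-action feeds into the geometry of the lense that defines the symbol $\mathfrak{m}_\Lambda$.
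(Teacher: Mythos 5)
Your proposal is correct and follows essentially the same route as the paper: both unfold the definitions, use the explicit formula $\sigma_\Lambda(T)(x) = \frac{1}{\mathcal{N}_\mathrm{B}(\Lambda)}\sum_{m,n\in\overbar{\mathrm{B}}_\Lambda^\mathbb{Z}} t_{m-n}e^{i(m-n)\cdot x}$ already derived in the introduction, and reindex the double sum by $p=m-n$ with multiplicity $\mathcal{N}_\mathrm{L}(\Lambda,p)$. The multiplicity count you single out is exactly the step the paper performs when passing to the sum over $n\in\overbar{\mathrm{B}}_\Lambda^\mathbb{Z}-\overbar{\mathrm{B}}_\Lambda^\mathbb{Z}$.
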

	
	\begin{proof}
		Both identities are just simple computations:
		\begin{align}
			\sigma_\Lambda \circ \rho_\Lambda (f) (x)
			&= \frac{1}{\mathcal{N}_{\mathrm{B}}(\Lambda)} \mathrm{Tr} \left( \ket{\psi} \bra{\psi} \alpha_x(P_\Lambda f P_\Lambda) \right) \\
			&= \frac{1}{\mathcal{N}_{\mathrm{B}}(\Lambda)} \mathrm{Tr} \left( \ket{\psi} \bra{\psi} \alpha_x \left( \left( \widehat{f}({k-l})\right)_{k,l \in \overline{\mathrm{B}}_\Lambda^\mathbb{Z}}\right) \right) \\
			&= \frac{1}{\mathcal{N}_{\mathrm{B}}(\Lambda)} \sum_{k, l \in \overline{\mathrm{B}}_\Lambda^\mathbb{Z}} \widehat{f}({k-l}) e^{i (k-l) \cdot x} \\
			&= \sum_{n \in \overline{\mathrm{B}}_\Lambda^\mathbb{Z} - \overline{\mathrm{B}}_\Lambda^\mathbb{Z}} \frac{\mathcal{N}_{\mathrm{L}}(\Lambda, n)}{\mathcal{N}_{\mathrm{B}}(\Lambda)} \widehat{f}(n) e_n(x) \\ 
			\\
			\rho_{\Lambda} \circ \sigma_\Lambda (T)
			&= \rho_{\Lambda} \left(\frac{1}{\mathcal{N}_{\mathrm{B}}(\Lambda)} \mathrm{Tr} \left(\ket{\psi} \bra{\psi} \alpha_\bullet(T)\right)\right) \\
			&=  \rho_{\Lambda} \left( \frac{1}{\mathcal{N}_{\mathrm{B}}(\Lambda)} \sum_{k,l \in \overline{\mathrm{B}}_\Lambda^{\mathbb{Z}}} t_{k-l} e_{k-l} \right) \\
			&=  \rho_{\Lambda} \left( \sum_{n \in \overline{\mathrm{B}}_{\Lambda}^{\mathbb{Z}} - \overline{\mathrm{B}}_{\Lambda}^{\mathbb{Z}}} \frac{\mathcal{N}_{\mathrm{L}}(\Lambda,n)}{\mathcal{N}_{\mathrm{B}}(\Lambda)} t_{n} e_n \right) \\
			&= \left( \frac{\mathcal{N}_{\mathrm{L}}(\Lambda,m-n)}{\mathcal{N}_{\mathrm{B}}(\Lambda)} t_{m-n} \right)_{m,n \in \overline{\mathrm{B}}_{\Lambda}^{\mathbb{Z}}}
		\end{align}
	\end{proof}

	\subsection{A transference result}
	
	In order to show that the pair $(\rho_\Lambda, \sigma_\Lambda)$ is a $\mathrm{C}^1$-approximate order isomorphism it remains to check that the compositions $\sigma_\Lambda \circ \nolinebreak \rho_\Lambda$ and $\rho_\Lambda \circ \sigma_\Lambda$ approximate the identity respectively on $\mathrm{C}(\mathbb{T}^d)$ and $\mathrm{C}(\mathbb{T}^d)^{(\Lambda)}$ in Lipschitz seminorm.
	
	Recall the definition of $\mathfrak{w}_\Lambda^\mu(n)$, for $\mu = 1,\dots,d$ and $n \in \mathbb{Z}^d$ from (\ref{eqn:Symbol_w}).
	
	\begin{lemma}\label{lem:CompositionsInTermsOfCommutators}
		For every $f \in \mathrm{C}^\infty(\mathbb{T}^d)$ we have the following equality of bounded operators on the Hilbert space $\mathrm{L}^ 2(\mathbb{T}^d) \otimes V$:
		\begin{align}
			\left(f - \sigma_\Lambda \circ \rho_\Lambda (f)\right) \otimes \mathbf{1}
			= \frac{i}{2} \left(\sum_{\mu=1}^d \mathcal{F}_{\mathfrak{w}_\Lambda^\mu} \otimes \{\gamma^\mu,\cdot\}\right) ([D,f]),
		\end{align}
		where $\mathcal{F}_{\mathfrak{w}_\Lambda^\mu}$ is the Fourier multiplier on the $\ast$-algebra $\mathrm{C}^\infty(\mathbb{T}^d)$ with symbol $\mathfrak{w}_\Lambda^\mu$.
		
		Similarly, for every $T \in \mathrm{C}(\mathbb{T}^d)^{(\Lambda)}$, we have the following equality of bounded operators on the Hilbert space $P_\Lambda \mathrm{L}^ 2(\mathbb T^d) \otimes V$:
		\begin{align}
			\left(T - \rho_\Lambda \circ \sigma_\Lambda (T) \right) \otimes \mathbf{1}
			= \frac{i}{2} \left( \sum_{\mu=1}^{d}\mathcal{S}_{\mathfrak{w}_\Lambda^\mu} \otimes \{\gamma^\mu,\cdot\}\right) ([D_\Lambda,T]),
		\end{align}
		where $\mathcal{S}_{\mathfrak{w}_\Lambda^\mu}$ denotes Schur (i.e.\@ entrywise) multiplication on the operator system $\mathrm{C}(\mathbb{T}^d)^{(\Lambda)}$ with symbol $\mathfrak{w}_\Lambda^\mu$.
	\end{lemma}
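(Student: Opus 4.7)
My plan for both identities is a direct calculation in Fourier modes (respectively matrix entries), relying on three ingredients already recorded earlier in the paper: the formulas $\sigma_\Lambda \circ \rho_\Lambda = \mathcal{F}_{\mathfrak{m}_\Lambda}$ and $\rho_\Lambda \circ \sigma_\Lambda = \mathcal{S}_{\mathfrak{m}_\Lambda}$ from \autoref{lem:ComputationOfCompositions}; the mode-by-mode expansions of $[D,f]$ and $[D_\Lambda,T]$ in terms of the Clifford basis recorded in the Preliminaries; and the Clifford relation $\{\gamma^\mu,\gamma^\nu\} = 2\delta^{\mu\nu}\mathbf{1}$. The symbol $\mathfrak{w}_\Lambda^\mu$ was tailor-made so that $\sum_{\mu=1}^d \mathfrak{w}_\Lambda^\mu(n)\, n_\mu = 1 - \mathfrak{m}_\Lambda(n)$ for every $n \in \mathbb{Z}^d$ (the value $n=0$ is consistent with $\mathfrak{m}_\Lambda(0)=1$), and this algebraic identity is the heart of the argument.

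For the first identity, I rewrite the left-hand side using \autoref{lem:ComputationOfCompositions} as
\begin{align*}
  (f - \sigma_\Lambda \circ \rho_\Lambda(f)) \otimes \mathbf{1} = \sum_{n \in \mathbb{Z}^d} (1 - \mathfrak{m}_\Lambda(n))\, \widehat{f}(n)\, e_n \otimes \mathbf{1}.
\end{align*}
On the right-hand side, I insert the expansion $[D,f] = \sum_\nu \sum_n n_\nu\, \widehat{f}(n)\, e_n \otimes \gamma^\nu$ into the operator $\sum_\mu \mathcal{F}_{\mathfrak{w}_\Lambda^\mu} \otimes \{\gamma^\mu, \cdot\}$. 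Because $\mathcal{F}_{\mathfrak{w}_\Lambda^\mu}$ acts diagonally on Fourier modes, this produces a double sum $\sum_{\mu,\nu}\sum_n \mathfrak{w}_\Lambda^\mu(n)\, n_\nu\, \widehat{f}(n)\, e_n \otimes \{\gamma^\mu,\gamma^\nu\}$; the Clifford anticommutator collapses this to the diagonal $\mu=\nu$, and the engineered identity for $\mathfrak{w}_\Lambda^\mu$ finishes the match up to the normalisation constant built into the statement.

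The second identity is established by the same sequence of manipulations, transferred from Fourier modes to matrix entries indexed by $\overbar{\mathrm{B}}_\Lambda^\mathbb{Z} \times \overbar{\mathrm{B}}_\Lambda^\mathbb{Z}$. By \autoref{lem:ComputationOfCompositions} the left-hand side equals $\bigl((1 - \mathfrak{m}_\Lambda(k-l))\, t_{k-l}\bigr)_{k,l} \otimes \mathbf{1}$; substituting $[D_\Lambda, T] = \sum_\nu \bigl((k_\nu-l_\nu) t_{k-l}\bigr)_{k,l} \otimes \gamma^\nu$ into the Schur-multiplier expression, applying $\mathcal{S}_{\mathfrak{w}_\Lambda^\mu}$ entrywise, and collapsing the Clifford double sum produces entries $\sum_\mu \mathfrak{w}_\Lambda^\mu(k-l)(k_\mu-l_\mu) t_{k-l} = (1 - \mathfrak{m}_\Lambda(k-l))\, t_{k-l}$, as required.

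The verification is purely algebraic: there is no estimate, no subtle convergence issue, and no genuine obstacle beyond careful bookkeeping of which tensor leg carries the multiplier and which carries the Clifford anticommutator. The analytic weight of the proof of convergence of spectral truncations lies not here but in the subsequent step, where one must bound the $\cb$-norms of the associated Fourier and Schur multipliers via the transference theorem, and ultimately reduce the estimate to the goodness of the spectral Fejér kernel.
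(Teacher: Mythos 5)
Your proposal is correct and follows essentially the same route as the paper's proof: both rewrite the left-hand side via \autoref{lem:ComputationOfCompositions} as the multiplier with symbol $1-\mathfrak{m}_\Lambda$, factor $1-\mathfrak{m}_\Lambda(n)=\sum_{\mu}\mathfrak{w}_\Lambda^\mu(n)\,n_\mu$ (with $n=0$ handled by $\mathfrak{m}_\Lambda(0)=1$), and collapse the resulting double Clifford sum using $\{\gamma^\mu,\gamma^\nu\}=2\delta^{\mu\nu}\mathbf{1}$. The only point you leave implicit is the exact prefactor (``the normalisation constant built into the statement''); carrying the $-i$ from $[D,f]=\sum_{\nu}(-i\partial_\nu f)\otimes\gamma^\nu$ explicitly through the computation would pin it down.
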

	
	\proof 
	For the first claim, we have:
	\begin{align}
		f - \sigma_\Lambda \circ \rho_\Lambda (f)
		&=f- \mathcal{F}_{\mathfrak{m}_\Lambda} (f) = \sum_{n \in \Z^d}  (1-\mathfrak{m}_\Lambda(n) )  \widehat{f}(n) e_n
		\\
		&=  \sum_{\mu=1}^{d} \sum_{n \in \Z^d}  (1-\mathfrak{m}_\Lambda(n) )  \frac{n_\mu}{\| n\|^2}   \cdot  n_\mu \widehat{f}(n) e_n \\
		& =  \sum_{\mu,\nu=1}^{d} \mathcal{F}_{\mathfrak{w}^\mu_\Lambda} ( -i \partial_\nu f) \cdot \delta^{\mu\nu}.
	\end{align}
	The first claimed result then follows by writing $2\delta^{\mu\nu} \cdot \mathbf{1} = \{\gamma^\mu,\gamma^\nu\}$.
	
	The computation for the second claim is largely analogous:
	\begin{align}
		T - \rho_\Lambda \circ \sigma_\Lambda (T)
		&= T- \mathcal{S}_{\mathfrak{m}_\Lambda}(T) \\
		&= \sum_{\mu=1}^d \left( (1-\mathfrak{m}_\Lambda(k-l)) \frac{k_\mu-l_\mu }{\|k-l\|^2} \cdot  (k_\mu-l_\mu)t_{k-l} \right)_{k,l \in \overbar{\mathrm{B}}_\Lambda^{\mathbb{Z}}} \\
		&= i \sum_{\mu,\nu=1}^d \mathcal{S}_{\mathfrak{w}^\mu_\Lambda}\left(  \left( (k_\nu-l_\nu) t_{k-l} \right )_{k,l\in \overbar{\mathrm{B}}_\Lambda^{\mathbb{Z}}} \right) \cdot \delta^{\mu\nu}.
	\end{align}
	The result now follows by combining the defining relations for the gamma-matrices as before with the expression (\ref{eqn:CompressedDiracCommutator}) for the operator $ [D_\Lambda,T] \in \mathcal{B}(P_\Lambda \mathcal{H})$.
	\endproof
	
	It is a classical result of Bo\.zejko and Fendler \cite{BF84} that for any discrete group $\Gamma$ and function $\varphi: \Gamma \to \C$ the cb-norm of Schur multiplication $S_\varphi$ on $\B(\ell^2(\Gamma))$ coincides with the cb-norm of Fourier multiplication $\F_\varphi$ on $\mathrm{C}^*_\lambda(\Gamma)$  (see also \cite[Theorem 6.4]{Pis95} and \cite[Proposition D.6]{BO08}). 
	However, the two linear maps obtained in \autoref{lem:CompositionsInTermsOfCommutators} act on the operator subsystems of differential forms on $\mathcal{H}$ and $P_\Lambda \mathcal{H}$, respectively, so the result of Bo\.zejko and Fendler does not apply directly. 
	We prove a variation on it which relates the cb-norms of the two linear maps which appear in \autoref{lem:CompositionsInTermsOfCommutators}:
	\begin{align}
		\label{eq:Fw}
		\F_{\mathfrak{w}_\Lambda} := \frac i2 \sum_{\mu=1}^d \F_{\mathfrak{w}^\mu_\Lambda} \otimes \{ \gamma^\mu,\cdot \}& : [D, \mathrm{C}^\infty(\T^d)] \to \mathrm{C}^\infty(\T^d);\\
		\label{eq:Sw}
		\S_{\mathfrak{w}_\Lambda} := \frac i2 \sum_{\mu=1}^d \S_{\mathfrak{w}^\mu_\Lambda} \otimes \{ \gamma^\mu,\cdot \}&: [D_\Lambda, \mathrm{C}(\T^d)^{(\Lambda)}] \to \mathrm{C}(\T^d)^{(\Lambda)},  
	\end{align}
	Here we consider $[D,\mathrm{C}^\infty(\T^d)]$ and $[D_\Lambda, \mathrm{C}(\T^d)^{(\Lambda)}]$ as (dense subsets of) operator systems in $\B(\mathrm{L}^2(\T^d) \otimes V)$. 
	
	\begin{lemma}
		\label{lem:Sw-Fw-norms}
		For the above two linear maps we have the following norm inequality:
		$$\| \S_{\mathfrak{w}_\Lambda} \| \leq \|  \F_{\mathfrak{w}_\Lambda}  \|
		$$
	\end{lemma}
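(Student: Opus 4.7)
The plan is to deduce the inequality from a single application of the cb-amplification bound $\|\Phi \otimes \mathrm{id}_{M_N}\| \leq \|\Phi\|_{\mathrm{cb}}$ applied to $\mathcal{F}_{\mathfrak{w}_\Lambda}$, after choosing the right $M_N$-valued test element. Set $N := \mathcal{N}_{\mathrm{B}}(\Lambda)$, fix $T = (t_{k-l})_{k,l \in \overbar{\mathrm{B}}_\Lambda^\mathbb{Z}} \in \mathrm{C}(\mathbb{T}^d)^{(\Lambda)}$, and form
\begin{align*}
\omega := \bigl(t_{k-l}\,[D,e_{k-l}]\bigr)_{k,l \in \overbar{\mathrm{B}}_\Lambda^\mathbb{Z}} \in M_N\bigl([D,\mathrm{C}^\infty(\mathbb{T}^d)]\bigr),
\end{align*}
whose entries manifestly lie in the domain of $\mathcal{F}_{\mathfrak{w}_\Lambda}$. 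Since $\mathrm{C}^\infty(\mathbb{T}^d)$ embeds in the commutative $\mathrm{C}^*$-algebra $\mathrm{C}(\mathbb{T}^d)$, we have $\|\mathcal{F}_{\mathfrak{w}_\Lambda}\|_{\mathrm{cb}} = \|\mathcal{F}_{\mathfrak{w}_\Lambda}\|$, so the amplification inequality reads $\|(\mathcal{F}_{\mathfrak{w}_\Lambda} \otimes \mathrm{id}_{M_N})(\omega)\| \leq \|\mathcal{F}_{\mathfrak{w}_\Lambda}\|\cdot\|\omega\|$. The remaining work is to identify the two sides of this inequality with $\|\mathcal{S}_{\mathfrak{w}_\Lambda}([D_\Lambda,T])\|$ and $\|\mathcal{F}_{\mathfrak{w}_\Lambda}\|\cdot\|[D_\Lambda,T]\|$, respectively.

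For the right-hand side, I view $\omega$ as multiplication on $\mathrm{L}^2(\mathbb{T}^d) \otimes V \otimes \mathbb{C}^N$ by the matrix-valued function $\Omega(x)$ with $(k,l)$-entry $t_{k-l}\sum_\mu (k_\mu-l_\mu)e^{i(k-l)\cdot x}\gamma^\mu$. Introducing the diagonal unitary $U_x := \mathrm{diag}(e^{ik\cdot x})_{k \in \overbar{\mathrm{B}}_\Lambda^\mathbb{Z}}$ on $\mathbb{C}^N$ and comparing with \eqref{eqn:CompressedDiracCommutator}, I observe that $\Omega(x) = (U_x \otimes \mathbf{1}_V)\,[D_\Lambda,T]\,(U_x^* \otimes \mathbf{1}_V)$, and unitary invariance of the operator norm yields $\|\omega\| = \sup_x\|\Omega(x)\| = \|[D_\Lambda,T]\|$. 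For the left-hand side, the Fourier half of \autoref{lem:CompositionsInTermsOfCommutators} gives $\mathcal{F}_{\mathfrak{w}_\Lambda}([D,e_n]) = (1-\mathfrak{m}_\Lambda(n))e_n$ for each $n\in\mathbb{Z}^d$. Hence $(\mathcal{F}_{\mathfrak{w}_\Lambda} \otimes \mathrm{id}_{M_N})(\omega)$ evaluated at $x$ is the matrix with $(k,l)$-entry $(1-\mathfrak{m}_\Lambda(k-l))t_{k-l}e^{i(k-l)\cdot x} = (U_x(T-\mathcal{S}_{\mathfrak{m}_\Lambda}(T))U_x^*)_{k,l}$, whose norm is $\|T-\mathcal{S}_{\mathfrak{m}_\Lambda}(T)\|$ for every $x$; by the Schur half of \autoref{lem:CompositionsInTermsOfCommutators} this coincides with $\|\mathcal{S}_{\mathfrak{w}_\Lambda}([D_\Lambda,T])\|$ (the spectator tensor factor $\otimes\mathbf{1}_V$ is inessential for operator norms). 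Combining these identities with the amplification bound and taking the supremum over $T$ completes the argument.

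The main conceptual point---what makes this the correct variant of Bo\.zejko--Fendler---is that the Clifford data appearing in the definitions \eqref{eq:Fw}--\eqref{eq:Sw} of $\mathcal{F}_{\mathfrak{w}_\Lambda}$ and $\mathcal{S}_{\mathfrak{w}_\Lambda}$ poses no obstruction: it is already absorbed into the commutators $[D,e_n]$ and $[D_\Lambda,T]$, so transference is effected simply by conjugation by the characters $U_x$ on the enlarged space $\mathbb{C}^N$, exactly as in the scalar case. I do not foresee any essential obstacle beyond careful bookkeeping with the tensor factors and checking that $\omega$ is genuinely in $M_N$ of the domain.
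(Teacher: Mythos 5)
Your argument is correct and is in essence the same transference argument as the paper's: your test matrix $\omega=(t_{k-l}[D,e_{k-l}])_{k,l}$ is precisely (up to a flip of tensor factors) the element $U([D,x]\otimes\mathbf{1}_{\mathcal{H}})U^*$ that the paper produces by conjugating with its shift-flip unitary $U$ on $\mathcal{H}\otimes\mathcal{H}$, and both proofs conclude via the amplification bound for $\mathcal{F}_{\mathfrak{w}_\Lambda}$ together with the fact that $\|\mathcal{F}_{\mathfrak{w}_\Lambda}\|_{\mathrm{cb}}=\|\mathcal{F}_{\mathfrak{w}_\Lambda}\|$ because the range lies in a commutative $\mathrm{C}^*$-algebra. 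The only difference is bookkeeping: where the paper obtains the norm identities for free from unitarity of the single global $U$, you verify them by hand through pointwise conjugation by the diagonal character unitaries $U_x$ and a supremum over $x$, which is equally valid.
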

	\proof
	We vary on the proof given in \cite[Theorem 6.4]{Pis95}. First identify $\mathrm{L}^2(\T^d) \otimes V \cong \ell^2(\Z^d) \otimes V$ using the Fourier basis $\{ e_n \}_{n \in \Z^d}$, and write $\H:=\ell^2(\Z^d) \otimes V$. Consider the unitary operator $U$ defined on $\H \otimes \H$ by a combination of a shift in Fourier space and a tensor flip in spinor space: 
	$$
	U (e_n \otimes v \otimes e_m \otimes v' ) = e_n \otimes v' \otimes e_{n+m} \otimes v
	$$
	Recall that an elementary matrix $E_{kl}$ ($k,l \in \Z^d$) acts on $\H$ as:
	\begin{align}
		E_{kl}(e_n \otimes v)& = \delta_{ln} e_k \otimes v,
		\intertext{in contrast to a generator $e_k$ in the group $\mathrm{C}^*$-algebra $\mathrm{C}^*(\Z^d)  \cong \mathrm{C}(\mathbb{T}^d)$, which acts as}
		e_k (e_n \otimes v) & = e_{n+k} \otimes v.
	\end{align}
	Note furthermore that under the identification $\mathrm{L}^ 2(\mathbb{T}^d) \cong \ell^ 2(\Z^ d)$ we have
	\begin{equation}
		\label{eq:dirac}
		D (e_n \otimes v ) = n e_n \otimes \gamma^ \mu v ,\qquad (n \in \Z^d, v \in V).
	\end{equation}
	We then find that
	\begin{align}
		U(E_{kl} \otimes \mathbf{1}_\H) U^* & = E_{kl} \otimes e_{k-l} \\
		U(E_{kl}  \gamma^\mu\otimes \mathbf{1}_\H) U^* & = E_{kl} \otimes e_{k-l} \gamma^\mu 
	\end{align}
	where $\gamma^\mu$ acts of course on the spinor space $V$. 
	Note that in view of Equation \eqref{eq:dirac} we also have $U([D,E_{kl}] \otimes \mathbf{1}_\H )U^* = E_{kl} \otimes [D,e_{k-l}] \in \mathcal{K}(\mathcal{H}) \otimes [D,\mathrm{C}^\infty(\mathbb{T}^d)]$. 
	
	Using this we may now show
	\begin{align}
		U \left( ( \S_{\mathfrak{w}_\Lambda}   \otimes \mathrm{id}_{\mathcal{B}(\mathcal{H})} )\big([D,E_{kl}] \otimes  \mathbf{1}_\H \big) \right)U^*
		&= \sum_\mu  U \left( \S_{\mathfrak{w}_\Lambda}  (\gamma^\mu (k-l)_\mu E_{kl})   \otimes  \mathbf{1}_\H   \right)U^*\\
		& = i \sum_\mu U \left( {\mathfrak{w}^\mu_\Lambda} (k-l) (k-l)_\mu E_{kl}   \otimes  \mathbf{1}_\H  \right)U^*\\
		&= i \sum_\mu  E_{kl} \otimes  {\mathfrak{w}^\mu_\Lambda} (k-l)  (k-l)_\mu e_{k-l} \\
		& =   E_{kl}  \otimes \F_{\mathfrak{w}_\Lambda}  ([D,e_{k-l}])\\
		&= (\mathrm{id}_{\mathcal{B}(\mathcal{H})} \otimes \F_{\mathfrak{w}_\Lambda}  )\left( U ([D, E_{kl}] \otimes \mathbf{1}_\H )U^* \right).
	\end{align}
	This extends by linearity to arbitrary $x = \sum_{k,l \in \overbar{\mathrm{B}}_\Lambda^\Z} t_{k-l} E_{kl} \in \mathrm{C}(\T^d)^{(\Lambda)}$ to yield
	$$
	U \left( ( \S_{\mathfrak{w}_\Lambda}   \otimes \id_{\mathcal{B}(\mathcal{H})} )\big([D,x] \otimes  \mathbf{1}_\H \big) \right)U^* = (\id_{\mathcal{B}(\mathcal{H})} \otimes \F_{\mathfrak{w}_\Lambda}  ) \left( U ([D, x] \otimes \mathbf{1}_\H )U^* \right).
	$$
	From this we obtain the following estimate:
	\begin{align}
		\| \S_{\mathfrak{w}_\Lambda}([D,x]) \| 
		&= \| \S_{\mathfrak{w}_\Lambda}([D,x]) \otimes \mathbf{1}_\mathcal{H} \| \\
		&= \| (\S_{\mathfrak{w}_\Lambda} \otimes \mathrm{id}_{\mathcal{B}(\mathcal{H})}) ([D,x] \otimes \mathbf{1}_\mathcal{H}) \| \\
		&= \| U \left( ( \S_{\mathfrak{w}_\Lambda}   \otimes \mathrm{id}_{\mathcal{B}(\mathcal{H})} )\big([D,x] \otimes  \mathbf{1}_\H \big) \right)U^* \| \\
		&= \| (\mathrm{id}_{\mathcal{B}(\mathcal{H})} \otimes \F_{\mathfrak{w}_\Lambda}  ) \left( U ([D, x] \otimes \mathbf{1}_\H )U^* \right) \| \\
		&\leq \| \mathrm{id}_{\mathcal{K}(\mathcal{H})} \otimes \F_{\mathfrak{w}_\Lambda} \| \| U ([D, x] \otimes \mathbf{1}_\H )U^* \| \\
		&= \| \F_{\mathfrak{w}_\Lambda}   \|_\cb \|[D,x]\|,
	\end{align}
	where the penultimate step follows from the fact that $U ([D, x] \otimes \mathbf{1}_\H )U^* \in \mathcal{K}(\mathcal{H}) \otimes [D,\mathrm{C}^\infty(\mathbb{T}^d)]$.
	This implies that $\| \S_{\mathfrak{w}_\Lambda}   \| \leq \| \F_{\mathfrak{w}_\Lambda}   \|_\cb$. 
	
	Finally, supposing that $\mathcal{F}_{{\mathfrak{w}_\Lambda} }$ is a bounded linear map its norm and $\mathrm{cb}$-norm coincide because its range is a subset of a commutative $\mathrm{C}^\ast$-algebra, namely $\mathrm{C}(\mathbb{T}^d)$ ({\em cf.} \cite[Theorem 3.9]{Pau02}). 
	\endproof
	
	\begin{remark}
		Note that the classical transference theorem is stated as an equality of the cb-norms of a Schur multiplier $\mathcal{S}_\varphi$ and a Fourier multiplier $\mathcal{F}_\varphi$.
		However, for this it is crucial that $\mathcal{F}_\varphi$ and $\mathcal{S}_\varphi$ are defined on the $\mathrm{C}^*$-algebras $\mathrm{C}_{\lambda}^*(\Gamma)$ and $\mathcal{B}(\ell^2(\Gamma))$ which is not the situation we find in the above lemma.
		In fact, the maps (\ref{eq:Fw}) and (\ref{eq:Sw}) are only defined on operator subsystems and do not extend to the $\mathrm{C}^*$-algebras $\mathrm{C}(\mathbb{T}^d)$ and $\mathcal{B}(\mathcal{H})$ respectively, so equality in \autoref{lem:Sw-Fw-norms} is not to be expected.
	\end{remark}
	
	Our task is thus reduced to the computation of the norm of the map $\mathcal{F}_{\mathfrak{w}_\Lambda} : [D,\mathrm{C}^\infty(\T^d)] \to \mathrm{C}^\infty(\T^d)$ given in Equation \eqref{eq:Fw}.

	\subsection{The spectral Fejér kernel}
	
	We define $K_{\mathfrak{m}_\Lambda}$ as the convolution kernel corresponding to the Fourier multiplier $\mathcal{F}_{\mathfrak{m}_\Lambda}$ as in (\ref{eqn:FourierMultiplierOnAbelianGroup}), i.e.
	\begin{align}
		K_{\mathfrak{m}_\Lambda} := \widecheck{\mathfrak{m}}_\Lambda.
	\end{align}
	In view of \autoref{lem:GoodKernelsGiveDiracEstimate} it would be desirable to see that $K_{\mathfrak{m}_\Lambda}$ is a \emph{good kernel}.
	Indeed, by \autoref{lem:CompositionsInTermsOfCommutators} this would give precisely the estimate of $\mathcal{F}_{\mathfrak{w}_\Lambda}$ which remains to show.
	
	\begin{lemma}\label{lem:PointwiseConvergenceOfSymbol}
		For every $n \in \mathbb{Z}^d$, we have that $\mathfrak{m}_\Lambda(n) \rightarrow 1$, as $\Lambda \rightarrow \infty$.
	\end{lemma}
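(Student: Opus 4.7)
The plan is to sandwich $\mathfrak{m}_\Lambda(n)$ between two ratios of lattice-point counts of balls and then appeal to the classical lattice-point asymptotics. For fixed $n \in \mathbb{Z}^d$ the numerator $\mathcal{N}_\mathrm{L}(\Lambda,n)$ counts the lattice points in the lense $\overbar{\mathrm{B}}_\Lambda \cap \overbar{\mathrm{B}}_\Lambda(n)$, which is trivially contained in $\overbar{\mathrm{B}}_\Lambda$ and trivially contains the concentric ball $\overbar{\mathrm{B}}_{\Lambda - \|n\|}$ (since $\|x\| \leq \Lambda - \|n\|$ implies $\|x - n\| \leq \|x\| + \|n\| \leq \Lambda$). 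This gives the two-sided bound
\begin{align}
\frac{\mathcal{N}_\mathrm{B}(\Lambda - \|n\|)}{\mathcal{N}_\mathrm{B}(\Lambda)} \leq \mathfrak{m}_\Lambda(n) \leq 1,
\end{align}
valid for all $\Lambda \geq \|n\|$.

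Next I would invoke the elementary lattice-counting asymptotic $\mathcal{N}_\mathrm{B}(\Lambda) = V_d \Lambda^d + O(\Lambda^{d-1})$, where $V_d$ denotes the volume of the Euclidean unit ball. This does not require any refined estimate in the Gauss circle problem: it follows from comparing $\mathcal{N}_\mathrm{B}(\Lambda)$ with the volume of $\overbar{\mathrm{B}}_\Lambda$ by attaching a unit cube to each lattice point inside, observing that the resulting region is sandwiched between $\overbar{\mathrm{B}}_{\Lambda - \sqrt{d}}$ and $\overbar{\mathrm{B}}_{\Lambda + \sqrt{d}}$, and expanding $(\Lambda \pm \sqrt{d})^d$ by the binomial theorem.

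Substituting this asymptotic on both sides of the sandwich yields
\begin{align}
\frac{\mathcal{N}_\mathrm{B}(\Lambda - \|n\|)}{\mathcal{N}_\mathrm{B}(\Lambda)} = \frac{V_d (\Lambda - \|n\|)^d + O(\Lambda^{d-1})}{V_d \Lambda^d + O(\Lambda^{d-1})} \longrightarrow 1
\end{align}
as $\Lambda \to \infty$ for $n$ fixed, so the squeeze theorem gives $\mathfrak{m}_\Lambda(n) \to 1$.

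I do not foresee any substantive obstacle: the statement is a pointwise statement for each fixed $n$, and the two estimates above are both elementary. The only point that requires a small amount of care is the geometric inclusion $\overbar{\mathrm{B}}_{\Lambda - \|n\|} \subseteq \overbar{\mathrm{B}}_\Lambda \cap \overbar{\mathrm{B}}_\Lambda(n)$, which is immediate from the triangle inequality. Note that this argument also gives the quantitative rate $1 - \mathfrak{m}_\Lambda(n) = O(\|n\|/\Lambda)$, which will presumably be needed later when uniformity in $n$ enters the Fejér kernel estimate.
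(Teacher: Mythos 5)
Your proof is correct and follows essentially the same route as the paper's: both rest on the inclusion $\overbar{\mathrm{B}}_{\Lambda-\|n\|}\subseteq\overbar{\mathrm{B}}_\Lambda\cap\overbar{\mathrm{B}}_\Lambda(n)$ and on comparing lattice-point counts with ball volumes after thickening or shrinking the radius by $\sqrt{d}$. The only cosmetic difference is that you phrase the conclusion as a squeeze of $\mathfrak{m}_\Lambda(n)$ between two ratios, while the paper bounds $|1-\mathfrak{m}_\Lambda(n)|$ directly; both yield the same $O(\|n\|/\Lambda)$ rate.
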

	
	\begin{proof}
		This follows from two simple geometric observations.
		One is that $|\mathcal{N}_\mathrm{B}(\Lambda) - \mathcal{V}_\mathrm{B}(\Lambda)| \leq \sqrt{d} \mathcal{A}_\mathrm{B}(\Lambda)$, where $\mathcal{V}_\mathrm{B}(\Lambda)$ is the volume of the $d$-dimensional ball of radius $\Lambda$ and $\mathcal{A}_\mathrm{B}(\Lambda)$ is its surface area.
		The other is that a lense (i.e.\@ the intersection of two $d$-dimensional balls of radius $\Lambda$ one of them shifted by a parameter $n \in \mathbb{Z}^d$) contains the ball of radius $\Lambda-\|n\|$, if this number is non-negative.
		Together, these observations yield the following estimates:
		\begin{align}
			|1-\mathfrak{m}_\Lambda(n)|
			&= \frac{1}{\mathcal{N}_\mathrm{B}(\Lambda)} |\mathcal{N}_\mathrm{B}(\Lambda)-\mathcal{N}_\mathrm{L}(\Lambda,n)| \\
			&\leq \frac{1}{\mathcal{V}_\mathrm{B}(\Lambda - \sqrt{d})} |\mathcal{V}_\mathrm{B}(\Lambda+\sqrt{d}) - \mathcal{V}_\mathrm{B}((\Lambda-\sqrt{d}-\|n\|)_+)| \\
			&= \frac{1}{(\Lambda-\sqrt{d})^d} \underbrace{| (\Lambda+\sqrt{d})^d - ((\Lambda-\sqrt{d}-\|n\|)_+)^d |}_{= \mathcal{O}(\Lambda^{d-1})} \\
			&\rightarrow 0, 
		\end{align}
		as $\Lambda \rightarrow \infty$.
		(Here $t_+ := t$, if $t \geq 0$, and $t_+ := 0$, if $t < 0$, for $t \in \mathbb{R}$.)
	\end{proof}
	
	\begin{proposition}\label{prop:SpectralFejerKernel}
		The function $K_{\mathfrak{m}_\Lambda}$ is positive and is a \emph{good kernel}.
	\end{proposition}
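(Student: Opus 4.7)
The plan is to identify $K_{\mathfrak{m}_\Lambda}$ explicitly as (a rescaling of) the modulus-squared of the spherical Dirichlet kernel, from which positivity and the two good-kernel conditions will follow with little effort. Concretely, set
\[
  D_\Lambda^{\mathrm{sph}}(x) := \sum_{k \in \overbar{\mathrm{B}}_\Lambda^\mathbb{Z}} e^{i k \cdot x}.
\]
The first step is to expand $|D_\Lambda^{\mathrm{sph}}|^2$ and group terms by the difference $n=k-l$:
\[
  |D_\Lambda^{\mathrm{sph}}(x)|^2 = \sum_{k,l \in \overbar{\mathrm{B}}_\Lambda^\mathbb{Z}} e^{i(k-l)\cdot x} = \sum_{n \in \overbar{\mathrm{B}}_\Lambda^\mathbb{Z} - \overbar{\mathrm{B}}_\Lambda^\mathbb{Z}} \mathcal{N}_{\mathrm{L}}(\Lambda,n)\, e_n(x),
\]
since for fixed $n$ the number of pairs $(k,l)$ with $k-l=n$ and both in $\overbar{\mathrm{B}}_\Lambda^\mathbb{Z}$ is precisely $\mathcal{N}_{\mathrm{L}}(\Lambda,n)$. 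Dividing by $\mathcal{N}_{\mathrm{B}}(\Lambda)$ yields
\[
  K_{\mathfrak{m}_\Lambda}(x) = \frac{1}{\mathcal{N}_{\mathrm{B}}(\Lambda)}\, \bigl|D_\Lambda^{\mathrm{sph}}(x)\bigr|^2,
\]
which immediately proves pointwise positivity.

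Next I would verify the two defining properties of a good kernel. Property (1) is just a matter of Fourier inversion: integrating against the normalized Haar measure on $\mathbb{T}^d$ picks out the zeroth Fourier coefficient, and $\mathfrak{m}_\Lambda(0) = \mathcal{N}_{\mathrm{L}}(\Lambda,0)/\mathcal{N}_{\mathrm{B}}(\Lambda) = 1$, so $\int_{\mathbb{T}^d} K_{\mathfrak{m}_\Lambda}(x)\,dx = 1$. For property (2), I would argue by weak convergence. Because $K_{\mathfrak{m}_\Lambda} \geq 0$ and $\int K_{\mathfrak{m}_\Lambda}\,dx = 1$, the family $\{K_{\mathfrak{m}_\Lambda}\,dx\}$ consists of Borel probability measures on the compact space $\mathbb{T}^d$. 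Their Fourier coefficients are exactly $\mathfrak{m}_\Lambda(n)$, and by \autoref{lem:PointwiseConvergenceOfSymbol} these converge to $1 = \widehat{\delta_0}(n)$ pointwise in $n$. Since trigonometric polynomials are dense in $\mathrm{C}(\mathbb{T}^d)$ and the probability measures are uniformly bounded in total variation, this forces weak-$\ast$ convergence $K_{\mathfrak{m}_\Lambda}\,dx \rightharpoonup \delta_0$. Testing against a continuous bump that equals $1$ on $\mathbb{T}^d \setminus \mathrm{B}_\delta(0)$ and is $0$ near $0$ then yields $\int_{\mathbb{T}^d \setminus \mathrm{B}_\delta(0)} K_{\mathfrak{m}_\Lambda}\,dx \to 0$, which is condition (2).

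The only genuinely non-routine point is the squared-kernel identity in step one; once it is in hand, positivity is free and the rest is a standard Fejér-style argument via weak convergence of probability measures with convergent Fourier coefficients. (Indeed the proof does not even require an explicit rate of concentration, since the qualitative good-kernel property is all that is needed to invoke \autoref{lem:GoodKernelsGiveDiracEstimate}.) If a reader prefers a more hands-on route to (2), one can instead note that $|D_\Lambda^{\mathrm{sph}}(x)|$ is maximized at $x=0$ with value $\mathcal{N}_{\mathrm{B}}(\Lambda)$ and, by the usual stationary phase / oscillation estimates for $D_\Lambda^{\mathrm{sph}}$, is $o(\mathcal{N}_{\mathrm{B}}(\Lambda)^{1/2})$ uniformly on $\mathbb{T}^d \setminus \mathrm{B}_\delta(0)$, so that $\int_{\mathbb{T}^d \setminus \mathrm{B}_\delta(0)} K_{\mathfrak{m}_\Lambda} \to 0$; but the soft weak-convergence argument above is more direct and suffices.
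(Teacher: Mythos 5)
Your proof is correct and follows essentially the same route as the paper's: positivity via the identity $K_{\mathfrak{m}_\Lambda}=\mathcal{N}_{\mathrm{B}}(\Lambda)^{-1}\lvert D_\Lambda^{\mathrm{sph}}\rvert^2$ (the paper writes this as $\mathfrak{m}_\Lambda$ being a normalized convolution-square of $\chi_{\overbar{\mathrm{B}}_\Lambda^{\mathbb{Z}}}$), unit mass from $\mathfrak{m}_\Lambda(0)=1$, and concentration from positivity plus \autoref{lem:PointwiseConvergenceOfSymbol} plus density of trigonometric polynomials --- your weak-$\ast$ convergence phrasing is just a repackaging of the paper's explicit sandwich with a non-negative trigonometric polynomial. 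The only caveat is your parenthetical ``hands-on'' alternative: a uniform $o(\mathcal{N}_{\mathrm{B}}(\Lambda)^{1/2})$ bound for the spherical Dirichlet kernel away from the origin is a delicate lattice-point/oscillatory-sum estimate and should not be asserted casually, but since you do not rely on it the proof stands.
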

	
	\begin{proof}
		We begin by showing positivity of $K_{\mathfrak{m}_\Lambda}$.
		Indeed, $K_{\mathfrak{m}_\Lambda} = \widecheck{\mathfrak{m}}_\Lambda$ is the (inverse) Fourier transform of a convolution-square: 
		\begin{align}\label{eqn:ConvolutionSquare}
			\begin{split}
				\mathfrak{m}_\Lambda(n)
				&= \frac{\mathcal{N}_\mathrm{L}(\Lambda,n)}{\mathcal{N}_\mathrm{B}} \\ 
				&= \frac{1}{\mathcal{N}_\mathrm{B}(\Lambda)} \sum_{k \in \mathbb{Z}^d} \chi_{\overbar{{\mathrm{B}}}_\Lambda^\mathbb{Z}}(k) \chi_{\overbar{{\mathrm{B}}}_\Lambda^\mathbb{Z}(n)}(k) \\
				&= \frac{1}{\mathcal{N}_\mathrm{B}(\Lambda)} \left(\chi_{\overbar{{\mathrm{B}}}_\Lambda^\mathbb{Z}} \ast \chi_{\overbar{{\mathrm{B}}}_\Lambda^\mathbb{Z}}\right) (n)
			\end{split}
		\end{align}
		
		Next, we check the total mass of $K_{\mathfrak{m}_\Lambda}$:
		\begin{align}
			\int_{\mathbb{T}^d} K_{\mathfrak{m}_\Lambda}(x) \, \mathrm{d}x
			&= \frac{1}{\mathcal{N}_\mathrm{B}(\Lambda)} \left(\chi_{\overbar{{\mathrm{B}}}_\Lambda^\mathbb{Z}} \ast \chi_{\overbar{{\mathrm{B}}}_\Lambda^\mathbb{Z}}\right) (0) \\
			&= \frac{\mathcal{N}_\mathrm{L}(\Lambda,0)}{\mathcal{N}_\mathrm{B}(\Lambda)}
			= 1
		\end{align}
		
		Last, we argue that the mass of $K_{\mathfrak{m}_\Lambda}$ becomes concentrated around $0$ as $\Lambda \rightarrow \infty$.
		Fix some $\delta > 0$ and let $\varepsilon > 0$ be arbitrary.
		Let $\varphi$ be a non-negative trigonometric polynomial such that the following holds:
		\begin{align}
			1- \chi_{\mathrm{B}_\delta}  \leq \varphi \leq 1 - \chi_{\mathrm{B}_{\nicefrac{\delta}{2}}} + \frac{\varepsilon}{2}
		\end{align}
		The existence of such a trigonometric polynomial $\varphi$ follows from Weierstraß's approximation theorem for trigonometric polynomials since the $\frac{\varepsilon}{4}$-neighborhood of the continuous function $\psi$, given by 
		\begin{align}
			\psi(x) = \begin{cases}
				\frac{\varepsilon}{4}, \text{ if } x \in \mathrm{B}_{\nicefrac{\delta}{2}}, \\
				\frac{2}{\delta} \|x\| + \frac{\varepsilon}{4} - 1, \text{ if } x \in \mathrm{B}_\delta \setminus \mathrm{B}_{\nicefrac{\delta}{2}}, \\
				1+\frac{\varepsilon}{4}, \text{ if } \mathrm{T}^d \setminus \mathrm{B}_\delta,
			\end{cases}
		\end{align}
		lies between $1- \chi_{\mathrm{B}_\delta}$ and $1 - \chi_{\mathrm{B}_{\nicefrac{\delta}{2}}} + \frac{\varepsilon}{2}$.
		Let $\Lambda_0 > 0$ be large enough such that, for all $\Lambda \geq \Lambda_0$, we have that $\displaystyle{\max_{n \in \mathrm{supp}(\widehat{\varphi})}} |1-\mathfrak{m}_\Lambda(n)| \leq \frac{\varepsilon}{2\|\widehat{\varphi}\|_{\ell^1(\mathbb{Z}^d)}}$.
		This is of course possible since the pointwise convergence from \autoref{lem:PointwiseConvergenceOfSymbol} implies uniform convergence to $1$ of the restriction of $\mathfrak{m}_\Lambda$ to the finite set $\mathrm{supp}(\widehat{\varphi})$.
		Then the following holds:
		\begin{align}
			\int_{\mathbb{T}^d \setminus \mathrm{B}_\delta} K_{\mathfrak{m}_\Lambda}(x) \, \mathrm{d}x
			&= \int_{\mathbb{T}^d} K_{\mathfrak{m}_\Lambda}(x)(1-\chi_{\mathrm{B}_\delta}(x)) \, \mathrm{d}x \\
			&\leq \int_{\mathbb{T}^d} K_{\mathfrak{m}_\Lambda}(x) \varphi(x) \, \mathrm{d}x \\
			&\leq \left|\int_{\mathbb{T}^d} K_{\mathfrak{m}_\Lambda}(x) \varphi(x) \, \mathrm{d}x - \varphi(0)\right| + \frac{\varepsilon}{2} \\
			&= \left|\sum_{n \in \mathbb{Z}^d} \mathfrak{m}_\Lambda(n) \widehat{\varphi}(n) - \sum_{n \in \mathbb{Z}^d} \widehat{\varphi}(n) \right| + \frac{\varepsilon}{2} \\
			&\leq \|\left(\mathfrak{m}_\Lambda-1\right)|_{\mathrm{supp}(\widehat{\varphi})}\|_{\ell^\infty(\mathbb{Z}^d)} \cdot \|\widehat{\varphi}\|_{\ell^1(\mathbb{Z}^d)} + \frac{\varepsilon}{2}\\
			&\leq \frac{\varepsilon}{2} + \frac{\varepsilon}{2},
		\end{align}
		for all $\Lambda \geq \Lambda_0$.
		In the fourth step we applied the Plancherel formula and in the fifth step the Hölder inequality.
	\end{proof}
	
	\begin{remark}
		Note that (\ref{eqn:ConvolutionSquare}) shows that the function $K_{\mathfrak{m}_\Lambda}$ is precisely the square of the well-known spherical Dirichlet kernel (\emph{cf.} Lipschitz seminorminition 3.1.6]{Gra14}).
		However, by a classical result by du Bois-Reymond the latter is not a \emph{good kernel} (\emph{cf.} \cite[Proposition 3.3.5]{Gra14}).
		The feature of $K_{\mathfrak{m}_\Lambda}$ which is crucial for its good behavior is positivity.
		We emphasize that our \emph{spectral Fejér kernel} does not coincide with the so-called \emph{circular Fejér kernel} which is investigated in \cite[Chapter 3]{Gra14}.
		Note in particular, that the \emph{circular} Fejér kernel fails to be \emph{good} in dimensions $d \geq 3$ which motivates the introduction of Bochner--Riesz summability methods.
	\end{remark}
	
	\begin{theorem}\label{thm:SpectralTruncationsConverge}
		Spectral truncations of $\mathbb{T}^d$ converge, for all $d \geq 1$.
	\end{theorem}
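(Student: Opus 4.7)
The plan is to prove this theorem by invoking the general criterion of \cite[Theorem 5]{vS21}, which reduces Gromov--Hausdorff convergence of the state spaces to two ingredients: weak$^*$-metrization of the Connes distance functions and the existence of a $\mathrm{C}^1$-approximate order isomorphism in the sense of \autoref{defn:C1-approx}. The first ingredient is automatic in our setting: on $\mathcal{S}(\mathrm{C}(\T^d))$ it was established in \cite{Con89}, and on each finite-dimensional operator system $\mathcal{S}(\mathrm{C}(\T^d)^{(\Lambda)})$ it follows from \cite[Theorem 1.8]{Rie98} as recorded in \cite[Remark 6]{vS21}. So the substantive task is to produce the approximate order isomorphism, and the natural candidates are the pair $(\rho_\Lambda, \sigma_\Lambda)$ already introduced.

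First I would invoke \autoref{lem:PropertiesOfTheCompressionMap} and \autoref{lem:PropertiesOfTheUpwardsMap} to dispose of conditions (i) and (ii) of \autoref{defn:C1-approx}. What remains is condition (iii): the pair of Lipschitz-seminorm estimates comparing the compositions $\sigma_\Lambda \circ \rho_\Lambda$ and $\rho_\Lambda \circ \sigma_\Lambda$ to the identities on $\mathrm{C}^\infty(\T^d)$ and $\mathrm{C}(\T^d)^{(\Lambda)}$. For the smooth-side estimate I would combine \autoref{lem:ComputationOfCompositions}, which identifies $\sigma_\Lambda \circ \rho_\Lambda$ with convolution against $K_{\mathfrak{m}_\Lambda}$, with \autoref{prop:SpectralFejerKernel} and \autoref{lem:GoodKernelsGiveDiracEstimate}: the good-kernel property yields directly
\[
\| f - \sigma_\Lambda \circ \rho_\Lambda(f) \| \leq \gamma_\Lambda \| [D,f] \|
\]
with some explicit $\gamma_\Lambda \to 0$.

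For the operator-system-side estimate I would chain the prepared transference tools. Reading the inequality just obtained through the smooth formula of \autoref{lem:CompositionsInTermsOfCommutators} gives $\|\mathcal{F}_{\mathfrak{w}_\Lambda}([D,f])\| \leq \gamma_\Lambda \|[D,f]\|$, hence $\|\mathcal{F}_{\mathfrak{w}_\Lambda}\| \leq \gamma_\Lambda$ on its natural domain $[D,\mathrm{C}^\infty(\T^d)]$. The transference estimate \autoref{lem:Sw-Fw-norms} then carries this across to $\|\mathcal{S}_{\mathfrak{w}_\Lambda}\| \leq \gamma_\Lambda$, and the operator-system formula of \autoref{lem:CompositionsInTermsOfCommutators} converts this into
\[
\| T - \rho_\Lambda \circ \sigma_\Lambda(T) \| \leq \gamma_\Lambda \| [D_\Lambda, T] \|,
\]
with the same net $(\gamma_\Lambda)_{\Lambda \geq 0}$ serving both estimates. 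All three conditions of \autoref{defn:C1-approx} then hold, so $(\rho_\Lambda,\sigma_\Lambda)$ is a $\mathrm{C}^1$-approximate order isomorphism and the theorem follows from \cite[Theorem 5]{vS21}. The only genuinely nontrivial ingredient is the good-kernel property of $K_{\mathfrak{m}_\Lambda}$, which is the true obstacle and has already been settled in \autoref{prop:SpectralFejerKernel} via the convolution-square representation of $\mathfrak{m}_\Lambda$; beyond that, the present theorem is a matter of assembling the prepared pieces.
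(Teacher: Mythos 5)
Your proposal is correct and follows essentially the same route as the paper's own proof: it assembles \autoref{lem:PropertiesOfTheCompressionMap}, \autoref{lem:PropertiesOfTheUpwardsMap}, \autoref{lem:ComputationOfCompositions}, \autoref{lem:CompositionsInTermsOfCommutators}, the good-kernel estimate from \autoref{prop:SpectralFejerKernel} together with \autoref{lem:GoodKernelsGiveDiracEstimate}, and the transference bound of \autoref{lem:Sw-Fw-norms}, then concludes via \cite[Theorem 5]{vS21}. Your explicit remark on the weak$^*$-metrization hypotheses is a welcome addition that the paper handles only in the introduction, but the argument is the same.
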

	
	\begin{proof}
		From $(T - \rho_\Lambda \circ \sigma_\Lambda(T)) \otimes \mathbf{1} 
		= \mathcal{S}_{\mathfrak{w}_\Lambda}(T)$ (\autoref{lem:CompositionsInTermsOfCommutators}) and $\|\mathcal{	S}_{\mathfrak{w}_\Lambda}\| \leq \|\mathcal{F}_{\mathfrak{w}_\Lambda}\|$ (\autoref{lem:Sw-Fw-norms}), together with $\mathcal{F}_{\mathfrak{w}_\Lambda}(f) = (f - \sigma_\Lambda \circ \rho_\Lambda(f)) \otimes \mathbf{1} = (f - K_{\mathfrak{w}_\Lambda} \ast f) \otimes \mathbf{1}$ (again \autoref{lem:CompositionsInTermsOfCommutators}), we obtain from \autoref{lem:GoodKernelsGiveDiracEstimate} a sequence $\gamma_\Lambda \rightarrow 0$, as $\Lambda \rightarrow \infty$, such that 
		\begin{align}
			\|T - \rho_\Lambda \circ \sigma_\Lambda(T)\| &\leq \gamma_\Lambda \|[D_\Lambda,T]\| \quad \text{and} \\
			\|f - \sigma_\Lambda \circ \rho_\Lambda(f)\| &\leq \gamma_\Lambda \|[D,f]\|,
		\end{align}
		since the spectral Fejér kernel $K_{\mathfrak{m}_\Lambda}$ is a good kernel (\autoref{prop:SpectralFejerKernel}).
		Together with \autoref{lem:PropertiesOfTheCompressionMap} and \autoref{lem:PropertiesOfTheUpwardsMap} this shows that $(\rho_\Lambda,\sigma_\Lambda)$ is a $\mathrm{C}^1$-approximate order isomorphism which by \cite[Theorem 5]{vS21} implies our result.
	\end{proof}
	
	\begin{remark}\label{rmk:OtherTruncationsConvergeToo}
		We point out that similarly convergence of other kinds of truncations of $\mathbb{T}^d$ can be shown.
		In particular, by replacing our projections $P_\Lambda$ with the projections $P_N^\square$ with $\mathrm{ran}(P_N^\square) = \{e_n \, : \, n_i = -N,\dots,N, \text{ for } i = 1,\dots,d\}$, analogous arguments to the ones presented in this chapter yield a new proof that the ``box-truncations'' of $\mathbb{T}^d$, which were considered in \cite{Ber19}, converge.
	\end{remark}

	\section{Structure Analysis of the Operator System $\mathrm{C}(\mathbb{T}^d)^{(\Lambda)}$}\label{sec:StructureAnalysis}
	
	\subsection{C*-envelope and propagation number}
	
	Recall \cite{Ham79} (see also \cite[Chapter 15]{Pau02}) that a \emph{$\mathrm{C}^*$-extension} of a unital operator system ${E}$ is a unital $\mathrm{C}^*$-algebra ${A}$ together with an injective completely positive map $\iota : {E} \rightarrow {A}$ such that $\mathrm{C}^*(\iota({E})) = {A}$.
	A $\mathrm{C}^*$-extension ${A}$ of ${E}$ is called the \emph{$\mathrm{C}^*$-envelope} and denoted by $\mathrm{C}_\mathrm{env}^*({E})$ if, for every unital $\mathrm{C}^*$-algebra ${B}$ and every unital completely positive map $\phi : {A} \rightarrow {B}$, the map $\phi$ is a complete order injection if the composition $\phi \circ \iota$ is.
	Recall furthermore from \cite{CvS21} that the \emph{propagation number} $\mathrm{prop}({E})$ is the smallest positive integer $n$ such that $(\iota({E}))^{\circ n} \subseteq \mathrm{C}_\mathrm{env}^*({E})$ is a $\mathrm{C}^*$-algebra, where ${F}^{\circ n} = \mathrm{span} \{ f_1 \cdots f_n \, : \, f_i \in {F}\}$, for a unital operator subsystem ${F}$ of a unital $\mathrm{C}^\ast$-algebra ${A}$.
	
	For $p \in \overbar{\mathrm{B}}_\Lambda^\mathbb{Z} + \overbar{\mathrm{B}}_\Lambda^\mathbb{Z}$, we define the following operator in $\mathrm{C}(\mathbb{T}^d)^{(\Lambda)} \subset \mathcal{B}(P_\Lambda \mathcal{H})$:
	\begin{align}\label{eqn:BasicToeplitzMatrix}
		T_p := \sum_{n \in \overbar{L}_\Lambda^{\mathbb{Z}}(p)} E_{n-p, n} 
		= \sum_{n \in \overbar{L}_\Lambda^{\mathbb{Z}}(-p)} E_{n,n+p},
	\end{align}
	where $E_{k,l} \in \mathcal{B}(P_\Lambda \mathcal{H})$ is the \emph{matrix unit} given by $\langle e_m , E_{k,l} e_n \rangle = \delta_{mk} \delta_{ln}$, for $k,l,m,n \in \overbar{\mathrm{B}}_\Lambda^\mathbb{Z}$.
	It is not hard to check that $\{T_p\}_{p \in \overbar{\mathrm{B}}_\Lambda^\mathbb{Z} + \overbar{\mathrm{B}}_\Lambda^\mathbb{Z}}$ is a basis for the operator system $\mathrm{C}(\mathbb{T}^d)^{(\Lambda)}$.
	
	With the preparations of \autoref{sec:ConvexGeometry}, we are in position to treat the $\mathrm{C}^*$-envelope and propagation number of the operator system $\mathrm{C}(\mathbb{T}^d)^{(\Lambda)}$.
	
	\begin{proposition}\label{prop:PropagationNumberOfSpectralMultiToeplitzSystem}
		The $\mathrm{C}^\ast$-envelope and the propagation number of $\mathrm{C}(\mathbb{T}^d)^{(\Lambda)}$ are given by  $\mathrm{C}^\ast_{\text{env}}(\mathrm{C}(\mathbb{T}^d)^{(\Lambda)}) = \mathcal{B}(P_\Lambda \mathcal{H})$ and $\prop(\mathrm{C}(\mathbb{T}^d)^{(\Lambda)}) = 2$.
	\end{proposition}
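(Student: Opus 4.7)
My plan is to reduce both conclusions to the single algebraic identity
\begin{equation*}
    (\mathrm{C}(\mathbb{T}^d)^{(\Lambda)})^{\circ 2} = \mathcal{B}(P_\Lambda \mathcal{H}).
\end{equation*}
Granting this, the $\mathrm{C}^*$-envelope statement is then soft. The identity forces $\mathrm{C}(\mathbb{T}^d)^{(\Lambda)}$ to generate $\mathcal{B}(P_\Lambda \mathcal{H}) \cong M_{\mathcal{N}_\mathrm{B}(\Lambda)}$ as a $\mathrm{C}^*$-algebra. Since this matrix algebra is simple, its only two-sided ideals are $\{0\}$ and the whole algebra; the Shilov boundary ideal cannot be the latter since the zero quotient fails to embed the unital system completely isometrically, so it must be trivial and $\mathrm{C}^*_\mathrm{env}(\mathrm{C}(\mathbb{T}^d)^{(\Lambda)}) = \mathcal{B}(P_\Lambda \mathcal{H})$. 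For the propagation number, the identity gives $\prop \leq 2$ immediately. For the matching lower bound I would argue by contradiction: were $\mathrm{C}(\mathbb{T}^d)^{(\Lambda)}$ already a $\mathrm{C}^*$-algebra, the identity would force $\mathrm{C}(\mathbb{T}^d)^{(\Lambda)} = \mathcal{B}(P_\Lambda \mathcal{H})$, but a $\mathbb{T}^d$-weight count rules this out: the weight spaces of $\mathrm{C}(\mathbb{T}^d)^{(\Lambda)}$ under the action $\alpha$ are one-dimensional lines $\mathbb{C} T_p$, whereas the weight-zero subspace of $\mathcal{B}(P_\Lambda \mathcal{H})$ is the algebra of diagonal matrices of dimension $\mathcal{N}_\mathrm{B}(\Lambda) \geq 2$ for $\Lambda \geq 1$.

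The substantive work lies in the identity itself. Both $\mathrm{C}(\mathbb{T}^d)^{(\Lambda)}$ and its square are preserved by the $\mathbb{T}^d$-action (\ref{eqn:TorusActionOnOperatorSystem}), so I would verify the equality weight space by weight space. For $r \in \overbar{\mathrm{B}}_\Lambda^\mathbb{Z} - \overbar{\mathrm{B}}_\Lambda^\mathbb{Z}$ the weight-$r$ component of $\mathcal{B}(P_\Lambda \mathcal{H})$ is $\mathrm{span}\{E_{n, n+r} : n \in \overbar{\mathrm{L}}_\Lambda^\mathbb{Z}(-r)\}$, of dimension $\mathcal{N}_\mathrm{L}(\Lambda, r)$, while the weight-$r$ component of $(\mathrm{C}(\mathbb{T}^d)^{(\Lambda)})^{\circ 2}$ is the span of the products $T_p T_{r-p}$ as $p$ ranges over $\overbar{\mathrm{B}}_\Lambda^\mathbb{Z} - \overbar{\mathrm{B}}_\Lambda^\mathbb{Z}$. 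A direct calculation from \eqref{eqn:BasicToeplitzMatrix} gives
\begin{equation*}
    T_p T_{r-p} = \sum_{n \in S(p, r)} E_{n, n+r}, \qquad S(p, r) := \{n \in \mathbb{Z}^d : n,\ n + p,\ n + r \in \overbar{\mathrm{B}}_\Lambda\},
\end{equation*}
so the identity reduces to the lattice-geometric statement that the characteristic functions $\{\chi_{S(p, r)}\}_p$ linearly span $\mathbb{C}^{\overbar{\mathrm{L}}_\Lambda^\mathbb{Z}(-r)}$ for every admissible $r$.

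This last step is where I expect the main obstacle and where the machinery of \autoref{sec:ConvexGeometry} should enter. My preferred strategy is an iterative peeling argument: start from the full diagonal $S(0, r) = \overbar{\mathrm{L}}_\Lambda^\mathbb{Z}(-r)$ and successively remove extreme lattice points by choosing $p$ so that the three-fold intersection $\overbar{\mathrm{B}}_\Lambda \cap (\overbar{\mathrm{B}}_\Lambda - p) \cap (\overbar{\mathrm{B}}_\Lambda - r)$ drops precisely one boundary point at a time; taking differences of the resulting characteristic functions then produces the indicators of singletons and so exhausts the full weight space. The technical core is verifying that admissible separators $p$ exist uniformly in $d$ and $r$ within the restricted range $\overbar{\mathrm{B}}_\Lambda^\mathbb{Z} - \overbar{\mathrm{B}}_\Lambda^\mathbb{Z}$, which is a genuinely convex-geometric question about how a Euclidean ball's lattice points are peeled off by translating a second copy of the ball, and I expect this bookkeeping to be the most delicate part of the argument.
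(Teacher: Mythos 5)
Your overall architecture coincides with the paper's: reduce everything to $(\mathrm{C}(\mathbb{T}^d)^{(\Lambda)})^{\circ 2}=\mathcal{B}(P_\Lambda\mathcal{H})$, decompose by $\mathbb{T}^d$-weight, compute the products of the basic operators (your formula $T_pT_{r-p}=\sum_{n\in S(p,r)}E_{n,n+r}$ agrees with the paper's \eqref{eqn:SpecialBasicToeplitzProducts} after reindexing), and deduce the two soft conclusions. The $\mathrm{C}^*$-envelope argument via simplicity is the paper's, and your lower bound $\prop>1$ via a weight-zero dimension count is a valid alternative to the paper's one-line observation that $E_{0,0}\notin\mathrm{C}(\mathbb{T}^d)^{(\Lambda)}$. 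The problem is that you stop precisely at the step you yourself flag as ``the most delicate part'': the existence of admissible translates $p$ making the spanning argument work. That existence statement is the actual mathematical content of the proposition --- the paper devotes all of \autoref{sec:ConvexGeometry} to it --- so as written the proposal is an accurate outline with the core lemma missing, not a proof.

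What has to be supplied is the following separation fact: for every $q\in\overbar{\mathrm{B}}_\Lambda^\mathbb{Z}$, writing $\Lambda'=\|q\|$ and $K_{\Lambda'}=\co(\overbar{\mathrm{B}}_{\Lambda'}^\mathbb{Z})$, there is an extreme point $m\in\ex(K_\Lambda)$ with $(K_\Lambda-m)\cap(K_{\Lambda'}-q)=\{0\}$. The paper gets this by observing that the cone generated by $K_{\Lambda'}-q$ at its extreme point $q$ is closed, convex and proper, hence admits a supporting hyperplane meeting it only at the vertex (\autoref{lem:Bourbaki.Ex.II.2.21.a}); minimizing the corresponding functional over $K_\Lambda$ then picks out the required extreme point $m$ (\autoref{lem:CpctConvSetsCanBeShiftedToPositiveHalfSpace}, combined in \autoref{lem:SeparateConvexCompactSetFromLocallyCompactClosedConvexSalientCone}). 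Setting $k:=p-m$ is automatically admissible, since $-m\in\overbar{\mathrm{B}}_\Lambda^\mathbb{Z}$ by symmetry of the ball, and one obtains $T_{-k}T_{l+k}=E_{p,q}+\sum E_{n-l,n}$ where every extra index satisfies $\|n\|>\|q\|$; a downward induction over the finitely many norm shells, anchored at $\|q\|=\Lambda$ where $m=-q$ gives a single matrix unit, completes the spanning argument. Note that this is weaker, and easier to arrange, than your requirement that the three-fold intersection ``drop precisely one boundary point at a time'': you do not need to remove single points, only to guarantee triangularity of the family of products with respect to the norm of the surviving indices. Until you prove the separation statement (or an equivalent), the claim that $\{\chi_{S(p,r)}\}_p$ spans $\mathbb{C}^{\overbar{\mathrm{L}}_\Lambda^\mathbb{Z}(-r)}$ remains unestablished.
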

	
	\begin{proof}
		The matrix order structure on $\mathrm{C}(\mathbb{T}^d)^{(\Lambda)}$ is the one inherited from the inclusion into $\mathcal{B}(P_\Lambda \mathcal{H})$.
		It remains to show that the inclusion $\mathrm{C}(\mathbb{T}^d)^{(\Lambda)} \hookrightarrow \mathcal{B}(P_\Lambda \mathcal{H})$ is a $\mathrm{C}^\ast$-extension, i.e.\@ that it generates $\mathcal{B}(P_\Lambda \mathcal{H})$.
		Indeed, if this is the case, it is clear that $\mathcal{B}(P_\Lambda \mathcal{H}) \cong \mathrm{C}^\ast_{\text{env}}(\mathrm{C}(\mathbb{T}^d)^{(\Lambda)})$ since $\mathcal{B}(P_\Lambda \mathcal{H})$ is simple.
		
		We will see that $\mathcal{B}(P_\Lambda \mathcal{H})$ is in fact spanned by respective products of two basic operators \eqref{eqn:BasicToeplitzMatrix}.
		To this end, let $p, q \in \overbar{\mathrm{B}}_\Lambda^\mathbb{Z} + \overbar{\mathrm{B}}_\Lambda^\mathbb{Z}$.
		Then, the following holds:
		\begin{align}
			\begin{split}
				T_{p}T_{q}
				&= \left( \sum_{n \in \overbar{L}_\Lambda^{\mathbb{Z}}(p)} E_{n-p, n} \right) 
				\left( \sum_{n \in \overbar{L}_\Lambda^{\mathbb{Z}}(-q)} E_{n,n+q} \right) \\
				&= \sum_{n \in \overbar{L}_\Lambda^{\mathbb{Z}}(p) \cap \overbar{L}_\Lambda^{\mathbb{Z}}(-q)} E_{n-p,n+q} \\
				&= \sum_{n \in \overbar{L}_\Lambda^{\mathbb{Z}}(p+q) \cap \overbar{L}_\Lambda^{\mathbb{Z}}(q)} E_{n-p-q,n},
			\end{split}
		\end{align}
		where we used the fact that $\left(\overbar{L}_\Lambda(p) \cap \overbar{L}_\Lambda(-q)\right) + q = \overbar{L}_\Lambda(p+q) \cap \overbar{L}_\Lambda(q)$ which can be easily checked.
		As a special case, for $l, k \in \overbar{\mathrm{B}}_\Lambda^\mathbb{Z} + \overbar{\mathrm{B}}_\Lambda^\mathbb{Z}$ such that $l + k \in \overbar{\mathrm{B}}_\Lambda^\mathbb{Z} + \overbar{\mathrm{B}}_\Lambda^\mathbb{Z}$, we obtain:
		\begin{align}\label{eqn:SpecialBasicToeplitzProducts}
			T_{-k} T_{l+k}
			= \sum_{n \in \overbar{L}_\Lambda^{\mathbb{Z}}(l) \cap  \overbar{L}_\Lambda^{\mathbb{Z}}(l+k)} E_{n-l,n}
		\end{align}
		Note that this generalizes the formula given in the proof of \cite[Proposition 4.2]{CvS21} where $d$ was equal to $1$.
		\footnote{
			Moreover, this formula may be interpreted in a similar way:
			The operator $T_{-k} T_{l+k}$ can be regarded as ``matrix'' (with multi-indexed entries) which has $0$-entries everywhere except for the $l$-th ``diagonal'' where its entries are either $1$ or $0$ depending on the parameter $k$.
		}
		
		We need some elementary geometric observations.
		For $\Lambda' \geq 0$, let $K_{\Lambda'} := \co\left(\overline{\mathrm{B}}_{\Lambda'}^{\mathbb{Z}}\right)$ denote the convex hull of the set of $\mathbb{Z}^d$-lattice points in the closed ball of radius $\Lambda'$.
		Note that $K_{\Lambda'}^{\mathbb{Z}} := K_{\Lambda'} \cap \mathbb{Z}^d = \overline{\mathrm{B}}_{\Lambda'}^{\mathbb{Z}}$.
		Furthermore, $K_{\Lambda'}$ is a polytope which is symmetric under reflections along coordinate axes and diagonals (i.e.\@ under changing signs of coordinates and exchanging coordinates).
		Clearly, all the extreme points of $K_{\Lambda'}$, the set of which is denoted by $\ex (K_{\Lambda'})$, have integer coordinates.
		Moreover, if $x \in K_{\Lambda'}$ is of norm $\|x\| = \Lambda'$ it is an extreme point, but not necessarily all extreme points of $K_{\Lambda'}$ are of norm $\Lambda'$ as can be seen in the case $d=2$, $\Lambda' = 3$ (\emph{cf.} \autoref{fig:PlotPolytopesCircles}).
		
		\begin{figure}[h!]
			\centering
			\includegraphics[width=.90\linewidth]{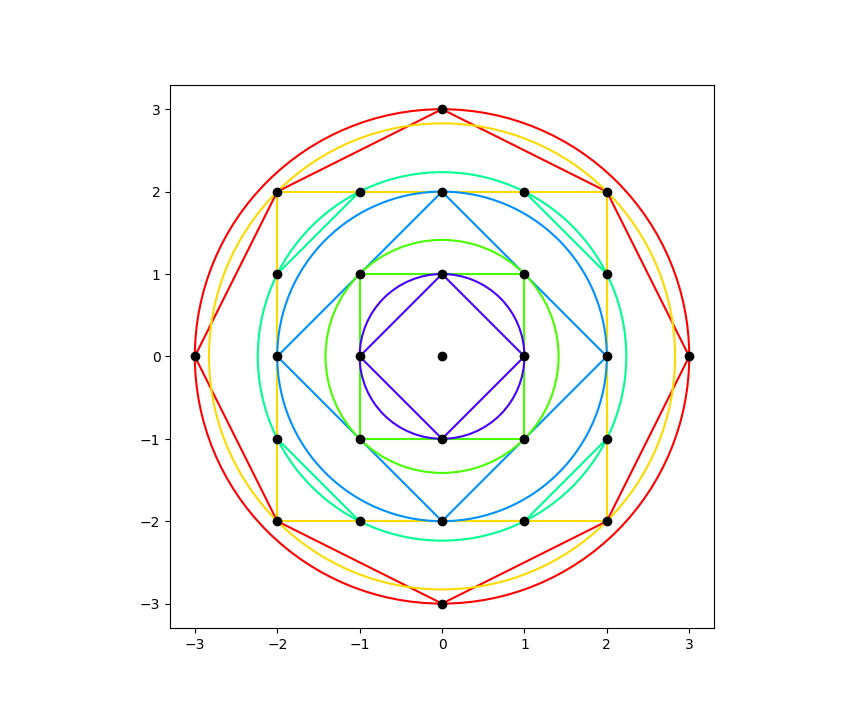}
			\caption{A plot of the boundaries of the balls $\mathrm{B}_\Lambda$ and the polytopes $K_\Lambda$, for $\Lambda = 0, 1, \sqrt{2}, 2, \sqrt{5}, 2\sqrt{2}, 3$.}
			\label{fig:PlotPolytopesCircles}
		\end{figure}
				
		In order to prove the claim it is enough to write every rank-one operator $E_{p,q} \in \mathcal{B}(P_\Lambda \mathcal{H})$ as a linear combination of products of the form (\ref{eqn:SpecialBasicToeplitzProducts}), where $p, q \in \overbar{\mathrm{B}}_\Lambda^\mathbb{Z}$ and $l, k \in \overbar{\mathrm{B}}_\Lambda^\mathbb{Z} + \overbar{\mathrm{B}}_\Lambda^\mathbb{Z}$ such that $l+k \in \overbar{\mathrm{B}}_\Lambda^\mathbb{Z} + \overbar{\mathrm{B}}_\Lambda^\mathbb{Z}$.
		To this end, fix $p, q \in \overbar{\mathrm{B}}_\Lambda^\mathbb{Z}$ and set $l := q-p \in \overbar{\mathrm{B}}_\Lambda^\mathbb{Z} + \overbar{\mathrm{B}}_\Lambda^\mathbb{Z}$.
		Set $\Lambda' := \|q\|$.
		We claim that we can find an extreme point $m \in \ex(K_\Lambda)$ such that the following holds:
		\begin{align}\label{eqn:ChoiceOfExtremalPoint}
			K_{\Lambda'} \cap (K_\Lambda - m + q) = \{q\}.
		\end{align}
		To see this, note that $q$ is an extreme point of $K_{\Lambda'}$ and that the smallest cone $[0,\infty) \cdot (K_{\Lambda'}-q) + q$ which contains $K_{\Lambda'}$ is locally compact (trivially in this finite dimensional case), closed, convex, proper and has vertex $q$. 
		By \autoref{lem:SeparateConvexCompactSetFromLocallyCompactClosedConvexSalientCone} we can find an extreme point $m \in \ex(K_\Lambda)$ such that $(K_\Lambda - m) \cap (K_{\Lambda'}-q) = (K_\Lambda - m) \cap \left([0,\infty) \cdot (K_{\Lambda'}-q) + q\right) = \{0\}$, which is equivalent to (\ref{eqn:ChoiceOfExtremalPoint}).
		
		Now, fix an extreme point $m \in \ex(K_{\Lambda})$ such that (\ref{eqn:ChoiceOfExtremalPoint}) is satisfied and set $k := q - l - m = p - m$.
		Note that for this $l$ and $k$ the product (\ref{eqn:SpecialBasicToeplitzProducts}) makes sense, i.e.\@ $k \in \overbar{\mathrm{B}}_\Lambda^\mathbb{Z} + \overbar{\mathrm{B}}_\Lambda^\mathbb{Z}$ and $l+k \in \overbar{\mathrm{B}}_\Lambda^\mathbb{Z} + \overbar{\mathrm{B}}_\Lambda^\mathbb{Z}$.
		Furthermore, the following holds:
		\begin{align}
			\left(\overbar{\mathrm{L}}_\Lambda^\mathbb{Z}(l) \cap \overbar{\mathrm{L}}_\Lambda^\mathbb{Z}(l+k)\right) \cap K_{\Lambda'} = \{q\}
		\end{align}
		The converse inclusion is clear from (\ref{eqn:ChoiceOfExtremalPoint}) together with $\overbar{\mathrm{L}}_\Lambda^\mathbb{Z}(l+k) \subset \overbar{\mathrm{B}}_\Lambda^\mathbb{Z}(l+k) = K_\Lambda^\mathbb{Z} + q-m$, where $q-m = l+k$ was used.
		
		This shows that, for $l = q-p$ and $k = p - m$, the rank-one operator $E_{p,q}$ is a summand of $T_{-k} T_{l+k}$ as in (\ref{eqn:SpecialBasicToeplitzProducts})
		where, for all the other rank-one operators $E_{n-l,n}$ appearing in the sum, we have that $\|n\| > \|q\|$, i.e.
		\begin{align}\label{eqn:SpecialBasicToeplitzProductsPeelingOffShells}
			E_{p,q} = T_{-k} T_{l+k} - \sum_{\underset{\|n\| > \|q\|}{n \in \overbar{\mathrm{L}}_\Lambda^\mathbb{Z}(l+k) \cap  \overbar{\mathrm{L}}_\Lambda^\mathbb{Z}(l)}} E_{n-l,n}.
		\end{align}
		Moreover, for each $E_{n-l,n}$ in the above sum, a similar expression can be obtained, and so forth.
		Hence, after finitely many steps this gives a finite linear combination of products of the form (\ref{eqn:SpecialBasicToeplitzProducts}) for $E_{p,q}$.
		
		Altogether, this proves that $\mathcal{B}(P_\Lambda \mathcal{H}) \subseteq \vsspan \left\{ T_{-k} T_{l+k} \, \big| \, l, k, l+k \in \overbar{\mathrm{B}}_\Lambda^\mathbb{Z} + \overbar{\mathrm{B}}_\Lambda^\mathbb{Z} \right\}$ which shows that $\mathrm{C}^\ast_{\text{env}}(\mathrm{C}(\mathbb{T}^d)^{(\Lambda)}) \cong \mathcal{B}(P_\Lambda \mathcal{H})$ and $\prop(\mathrm{C}(\mathbb{T}^d)^{(\Lambda)}) \leq 2$.
		Realizing that $E_{0,0} \notin \mathrm{C}(\mathbb{T}^d)^{(\Lambda)}$ it is clear that $\prop(\mathrm{C}(\mathbb{T}^d)^{(\Lambda)}) > 1$.
		This finishes the proof.
	\end{proof}
	
	We illustrate the procedure of expressing elementary matrices $E_{p,q} \in \mathcal{B}(P_\Lambda \mathcal{H})$ in terms of products of basic operators of the form \eqref{eqn:SpecialBasicToeplitzProducts} as described in the above proof in the following two examples.
	
	\begin{example}\label{ex:ProductOfBasicToeplitzMatricesForExtremePointsOfMaximalNorm}
		Let $p, q \in \overbar{\mathrm{B}}_\Lambda^\mathbb{Z}$ such that $\|q\| = \Lambda$.
		Finding an $m \in \ex(K_\Lambda)$ such that (\ref{eqn:ChoiceOfExtremalPoint}) holds is particularly easy in this case, namely, set $m := -q$.
		Then set $k := p-m = p + q$ and, with $l = q-p$, we obtain:
		\begin{align}
			E_{p,q} 
			= T_{-k} T_{l+k}
			= T_{-p-q} T_{q-p+p+q}
			= T_{-p-q} T_{2q},
		\end{align}
		according to (\ref{eqn:SpecialBasicToeplitzProductsPeelingOffShells}), since there are no $n \in \overbar{\mathrm{B}}_\Lambda^\mathbb{Z}$ with $\|n\| > \|q\| = \Lambda$.
	\end{example}
	
	\begin{example}
		Let $d=2$ and $\Lambda = \sqrt{2}$, i.e.\@ $\overbar{\mathrm{B}}_\Lambda^\mathbb{Z}$ consists of $9$ points and $K_\Lambda = \co\left(\overbar{\mathrm{B}}_\Lambda^\mathbb{Z}\right)$ is the square with side length $2$.
		For $p = (0,0)$ and $q = (1,0)$, we want to express the matrix unit $E_{p,q}$ as a linear combination of products of basic operators \eqref{eqn:SpecialBasicToeplitzProducts} with $l = q-p = (1,0)$.
		Set $\Lambda' := \|q\| = 1$.
		Now, find an extreme point $m \in \ex(K_\Lambda)$ such that (\ref{eqn:ChoiceOfExtremalPoint}) holds.
		A valid choice is e.g.\@ $m := (-1,-1)$.
		Set $k := p-m = (1,1)$.
		Then we have:
		\[
		\overbar{\mathrm{L}}_\Lambda^\mathbb{Z}(l) \cap \overbar{\mathrm{L}}_\Lambda^\mathbb{Z}(l+k)
		= \overbar{\mathrm{L}}_{\sqrt{2}}^\mathbb{Z}(1,0) \cap \overbar{\mathrm{L}}_{\sqrt{2}}^\mathbb{Z}(2,1)
		= \{(1,0), (1,1)\}
		\]
		Therefore:
		\[
		T_{-k}T_{l+k}
		= T_{(-1,-1)}T_{(2,1)}
		= \sum_{n \in \overbar{\mathrm{L}}_{\sqrt{2}}^\mathbb{Z}(1,0) \cap \overbar{\mathrm{L}}_{\sqrt{2}}^\mathbb{Z}(2,1)} E_{n-(1,0),n}
		= \underbrace{E_{(0,0),(1,0)}}_{= E_{p,q}} + E_{(0,1),(1,1)}
		\]
		Note that $\|(1,1)\| = \sqrt{2} > \|q\| = 1$.
		
		By \autoref{ex:ProductOfBasicToeplitzMatricesForExtremePointsOfMaximalNorm}, we have $E_{(0,1),(1,1)} = T_{(-1,-2)}T_{(2,2)}$.
		Altogether, we obtain:
		\[
		E_{(0,0),(1,0)} = T_{(-1,-1)}T_{(2,1)} - T_{(-1,-2)}T_{(2,2)}
		\]
	\end{example}
	
	\begin{remark}
		We point out that analogously to the proof of \autoref{prop:PropagationNumberOfSpectralMultiToeplitzSystem} one can show that the $\mathrm{C}^*_\mathrm{env}(P_\Lambda^K \mathrm{C}^\infty(\mathbb{T}^d) P_\Lambda^K) = \mathcal{B}(P_\Lambda^K \mathcal{H})$ and $\mathrm{prop}(P_\Lambda^K \mathrm{C}^\infty(\mathbb{T}^d) P_\Lambda^K) = 2$, where $K$ is a convex compact subset of $\mathbb{R}^d$ which is symmetric with respect to reflections along the coordinate axes and diagonals and where $P_\Lambda^K \in \mathcal{B}(\mathcal{H})$ is the orthogonal projection with $\mathrm{ran}(P_\Lambda^K) = \{e_n \, : \, n \in (\Lambda \cdot K) \cap \mathbb{Z}^d\}$.
		In particular, this shows that the propagation number of the operator system obtained from ``box-truncations'', which were considered in \cite{Ber19}, is also $2$.
	\end{remark}

	\subsection{Dual}
	
	In \cite[Theorem 3.1]{Far21} it was shown that the operator system of $(N \times \nolinebreak N)$-Toeplitz matrices $\mathrm{C}(S^1)^{(N)}$ is dual to the Fejér--Riesz system $\mathrm{C}(S^1)_{(N)}$ which consists of trigonometric polynomials of the form $\sum_{n = -N+1}^{N-1} a_n e_n$. 
	In fact, this was already stated in \cite{CvS21} but it was only shown in \cite{Far21} that $\mathrm{C}(S^1)^{(N)} \cong \left(\mathrm{C}(S^1)_{(N)}\right)^{\mathrm{d}}$ in the sense that there is a unital \textit{complete} order isomorphism.
	For the proof, the (operator valued) Fejér--Riesz theorem plays an essential role.
	Recall that the Fejér--Riesz theorem states that every non-negative Laurent polynomial $P = \sum_{k = -N}^N a_k e_k \geq 0$ can be expressed as a hermitian square of an analytic polynomial $Q = \sum_{k=0}^N b_k e_k$, i.e.\@
	\[ P = Q^*Q.\]
	A generalization to the case where the coefficients $a_k$ and $b_k$ are operators on a Hilbert space is due to Rosenblum.
	See \cite{DR10} for a survey on the operator-valued Fejér--Riesz theorem.
	
	In view of the duality result for $S^1$, it would be natural to expect that the operator system $\mathrm{C}(\mathbb{T}^d)^{(\Lambda)}$ is dual to the operator system $\mathrm{C}(\mathbb{T}^d)_{(2\Lambda)}$ which consists of trigonometric polynomials on the $d$-torus of the form $\sum_{n \in \overbar{\mathrm{B}}_{2\Lambda}^\mathbb{Z}} a_n e_n$.
	However, this duality must fail even algebraically as soon as $d \geq 2$.
	Indeed, it is clear that $\dim\left(\mathrm{C}(\mathbb{T}^d)_{(2\Lambda)}\right) = \# \overbar{\mathrm{B}}_{2\Lambda}^\mathbb{Z} = \mathcal{N}_\mathrm{B}(2\Lambda)$ and from the above considerations about a basis we conclude that $\dim\left(\mathrm{C}(\mathbb{T}^d)^{(\Lambda)}\right) = \# \left(\overbar{\mathrm{B}}_\Lambda^\mathbb{Z} - \overbar{\mathrm{B}}_\Lambda^\mathbb{Z}\right) = \# \left(\overbar{\mathrm{B}}_\Lambda^\mathbb{Z} + \overbar{\mathrm{B}}_\Lambda^\mathbb{Z}\right)$.
	In general, however, the inclusion $\overbar{\mathrm{B}}_\Lambda^\mathbb{Z} + \overbar{\mathrm{B}}_\Lambda^\mathbb{Z} \subset\overbar{\mathrm{B}}_{2\Lambda}^\mathbb{Z}$ is strict, as the following examples demonstrate:
	\begin{example}
		If $d=2$, we have that $(3,2) \in \overbar{\mathrm{B}}_4^\mathbb{Z} \setminus \left(\overbar{\mathrm{B}}_2^\mathbb{Z} + \overbar{\mathrm{B}}_2^\mathbb{Z}\right)$.
		If $d \geq 3$, we have that $(1,\cdots,1) \in \overbar{\mathrm{B}}_2^\mathbb{Z} \setminus \left(\overbar{\mathrm{B}}_1^\mathbb{Z} + \overbar{\mathrm{B}}_1^\mathbb{Z}\right)$.
	\end{example}
	In view of these remarks, a more promising candidate for the dual operator system of $\mathrm{C}(\mathbb{T}^d)^{(\Lambda)}$ might be the operator subsystem $\mathrm{C}(\mathbb{T}^d)_{(\Lambda) + (\Lambda)}$  of $\mathrm{C}(\mathbb{T}^d)$ which consists of trigonometric polynomials on the $d$-torus of the form $\sum_{n \in \overbar{\mathrm{B}}_{\Lambda}^\mathbb{Z} + \overbar{\mathrm{B}}_{\Lambda}^\mathbb{Z}} a_n e_n$.
	
	In order to generalize the proof of \cite{Far21} to this setting, 
	one would need to show that the following map is a complete order isomorphism:
	\begin{align}
		\phi : \mathrm{C}(\mathbb{T}^d)^{(\Lambda)} &\rightarrow (\mathrm{C}(\mathbb{T}^d)_{(\Lambda)+(\Lambda)})^\mathrm{d} \\
		\mathbf{t} &\mapsto \varphi_\mathbf{t},
	\end{align}
	where 
	\begin{align}
		\varphi_\mathbf{t}(f) = \sum_{n \in \overbar{\mathrm{B}}_\Lambda^\mathbb{Z} + \overbar{\mathrm{B}}_\Lambda^\mathbb{Z}} t_{-n} a_n,
	\end{align}
	for $\mathbf{t} = (t_{k-l})_{k,l \in \overbar{\mathrm{B}}_\Lambda^\mathbb{Z} +\overbar{\mathrm{B}}_\Lambda^\mathbb{Z}}$ and $f = \sum_{n \in \overbar{\mathrm{B}}_\Lambda^\mathbb{Z} + \overbar{\mathrm{B}}_\Lambda^\mathbb{Z}} a_n e_n \in \mathrm{C}(\mathbb{T}^d)_{(\Lambda)+(\Lambda)}$.
	It is easy to see that $\phi$ is unital and an isomorphism of vector spaces, so it remains to show that $\phi$ and $\phi^{-1}$ are completely positive.
	
	While the proof of complete positivity of $\phi^{-1}$ is largely analogous to the one in \cite{Far21}, for the proof of complete positivity of $\phi$ a multivariate version of the (operator-valued) Fejér--Riesz theorem would be needed.
	In fact, it would be sufficient to be able to express a (dense subset of the cone of) non-negative Laurent polynomials in $d$ variables $P = \sum_{n \in \overbar{\mathrm{B}}_{\Lambda}^\mathbb{Z}} a_n e_n$ as a finite sum of squares of analytic polynomials $Q_i = \sum_{n \in \left(\overbar{\mathrm{B}}_{\Lambda}^\mathbb{Z}\right)_+} b_n^i e_n$, where $\left(\overbar{\mathrm{B}}_{\Lambda}^\mathbb{Z}\right)_+ = \overbar{\mathrm{B}}_\Lambda \cap (\mathbb{Z}_{\geq 0})^d$.
	Although it is not known to the authors if such a result holds for a dense subset of the positive cone
, at least for $d \geq 3$ it must fail for some non-negative Laurent polynomials even if higher degrees of the analytic polynomials $Q_i$ are admitted (\cite{Sch00}).
	For $d=2$, Dritschel proved (\cite[Theorem 4.1]{Dri18}) that if $P$ is a non-negative Laurent polynomial of degree $(d_1,d_2)$ (with Hilbert space operators as coefficients), then $P$ is a sum of at most $2d_2$ squares of analytic polynomials each of degree at most $(d_1,d_2-1)$.
	However, as it is stated in the conclusion of that article it is not clear by how much the number of summands and in particular the bound on the degree can be improved.
	Yet, there are examples of non-negative polynomials of degree $(d_1,d_2)$ which are not a finite sum of squares of analytic polynomials of degree $(d_1,d_2)$ (\cite{NS85}, \cite{Rud63}, \cite[Section 3.6]{Sak97}). 
	Note that it is in fact true that a strictly positive trigonometric polynomial $P > 0$ in $d$ variables is a finite sum of squares of analytic polynomials (\cite[Theorem 5.1]{DR10}), but the degrees might get out of control. 
	
	We see that it is apparently not as straightforward to determine the operator system dual of $\mathrm{C}(\mathbb{T}^d)^{(\Lambda)}$ as one might expect at first thought and we have to leave this to further research to be conducted.

	\appendix
	
	\section{Some convex geometry}\label{sec:ConvexGeometry}
	
	In order to compute the propagation number of the operator system $\mathrm{C}(\mathbb{T}^d)^{(\Lambda)}$, some facts from convex geometry are required.
	Since the natural setting for these is locally convex spaces we formulate all the required results in this abstract language even though we only make use of them in the finite dimensional case.
	See e.g.\@ \cite[Section 11]{Roc97} and \cite[Chapter II]{Bou87} for much of the standard terminology.
	
	Throughout this section, let $X$ be a Hausdorff locally convex space over $\mathbb{R}$ with continuous dual $X'$.
	Every linear functional $l$ on $X$ and every real number $\alpha \in \mathbb{R}$ give rise to a hyperplane in $X$ given by $H_{l = \alpha} =  \left\{x \in X \, | \, l(x) = \alpha\right\}$.
	Clearly, $H_{l=\alpha} = \ker(l-\alpha)$ and the hyperplane $H_{l=\alpha}$ is closed if and only if $l$ is continuous which is the only case we consider.
	Every hyperplane $H_{l=\alpha}$ gives rise to an open positive and an open negative half-space denoted by $H_{l>\alpha}$ and $H_{l<\alpha}$ respectively and defined by $H_{l>\alpha} := \left\{x \in X \, | \, l(x) > \alpha\right\}$ and similarly for $H_{l<\alpha}$.
	Their respective closures are called the closed positive and the closed negative half-space associated to $H_{l=\alpha}$ and denoted by $H_{l\geq\alpha}$ and $H_{l\leq\alpha}$ respectively. 
	Of course $H_{l\geq\alpha} = \left\{x \in X \, | \, l(x) \geq \alpha\right\}$ and similarly for $H_{l\leq\alpha}$.
	
	If $K, L \subseteq X$ are two convex sets, it is said that they are \emph{separated} by the hyperplane $H_{l=\alpha}$ if $K \subseteq H_{l \geq \alpha}$ and $L \subseteq H_{l \leq \alpha}$.
	The sets $K$ and $L$ are called \emph{properly separated} if additionally $K \nsubset H_{l=\alpha}$ or $L \nsubset H_{l=\alpha}$.
	The sets $K$ and $L$ are called \emph{strictly separated} by $H_{l=\alpha}$ if $K \subseteq H_{l > \alpha}$ and $L \subseteq H_{l < \alpha}$.
	A hyperplane $H_{l=\alpha}$ is called a \emph{supporting hyperplane} for a non-empty convex set $K \subset X$, if $K \subseteq H_{l \geq \alpha}$ and $x \in H_{l=\alpha}$, for at least one $x \in K$.
	A supporting hyperplane $H_{l=\alpha}$ for $K$ is called \emph{non-trivial} if $K \nsubseteq H_{l=\alpha}$.
	
	Recall that a cone $C \subseteq X$ is called \emph{pointed} if $0 \in C$, \emph{salient} if it does not contain any $1$-dimensional subspaces of $X$ and \emph{proper} if $C \cap (-C) = \{0\}$.
	We only consider convex cones. 
	A convex pointed cone is salient if and only if it is proper.
	For $x \in X$, we call a set $C + x$ a \emph{cone with vertex $x$} if $C$ is a pointed cone.
	A cone with vertex $x$ is called \emph{proper} if $C-x$ is a proper cone.
	
	Let $K \subset X$ be convex set with $0 \notin K$.
	Set $C := [0,\infty) \cdot K$.
	Clearly, $C$ is a convex pointed cone.
	Moreover, $C$ is the smallest convex pointed cone which which contains $K$ in the sense that every convex pointed cone which contains $K$ must contain $C$.
	
	If $C$ is a convex pointed cone, a subset $B \subset C$ is called a \emph{base} (or \emph{sole} in \cite[II, §8.3]{Bou87}) if there exists a closed hyperplane $H \notni 0$ such that $B = H \cap C$ and such that $C$ is the smallest convex pointed cone which contains $B$.
	It is well-known that a convex subset $B$ of a convex pointed cone $C$ is a base if and only if, for every $x \in C \setminus \{0\}$, there exists a unique pair $(\lambda,y) \in (0,\infty) \times B$ such that $x = \lambda y$.
		
	The statement of the following lemma can be found in \cite[II, §7.2, Exercise 21a]{Bou87}.
	
	\begin{lemma}\label{lem:Bourbaki.Ex.II.2.21.a}
		Let $C \subset X$ be a locally compact, closed, convex, proper cone with vertex $x$.
		Then there exists a closed supporting hyperplane $H$ of $C$ such that $H \cap C = \{x\}$.
	\end{lemma}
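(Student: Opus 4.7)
The plan is to translate so that the vertex of $C$ is the origin (we may assume $x = 0$) and then produce a compact convex subset $B \subset C \setminus \{0\}$ that meets every ray of $C$. Once $B$ is in hand, geometric Hahn--Banach will strictly separate the compact convex set $B$ from $\{0\}$ by a closed hyperplane $H_{l = \alpha}$ with $\alpha > 0$ and $l \geq \alpha$ on $B$. Positive homogeneity of the cone $C$ will then force $l > 0$ on $C \setminus \{0\}$, so $H := \ker l$ will be the desired closed supporting hyperplane with $H \cap C = \{0\}$.

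For the construction of $B$, I would first use the local compactness of $C$ to pick a closed balanced convex neighborhood $V$ of $0$ in $X$ such that $K := V \cap C$ is compact, and consider its Minkowski functional $p_V$, which is a continuous sublinear functional on $X$. I claim that $p_V(y) > 0$ for every $y \in C \setminus \{0\}$: otherwise the entire ray $\{ty : t > 0\}$ would sit inside the bounded compact set $K$, which is impossible in a Hausdorff space since an unbounded ray cannot fit in a bounded set. Consequently every ray of $C$ hits the compact set $B_0 := \partial V \cap C$ at the point $y / p_V(y)$. I would then set $B := \overline{\co(B_0)}$; this is a closed subset of the compact set $K$, hence compact, and it lies in $C$ since $C$ is closed and convex.

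The main obstacle is verifying that $0 \notin B$. The properness hypothesis $C \cap (-C) = \{0\}$ takes care of this at the level of finite convex combinations: if $\sum_i \lambda_i b_i = 0$ with $\lambda_i > 0$, $\sum_i \lambda_i = 1$, and $b_i \in B_0$, then $-\lambda_1 b_1 = \sum_{i \geq 2} \lambda_i b_i \in C$, forcing $b_1 \in C \cap (-C) = \{0\}$ and contradicting $b_1 \in \partial V$. Hence $0 \notin \co(B_0)$. In the finite-dimensional setting that is actually used in the paper, Carathéodory's theorem gives $\overline{\co(B_0)} = \co(B_0)$, so $0 \notin B$ is immediate; in the general locally convex setting one would in addition observe that sublinearity forces $p_V \geq 1$ on $\co(B_0)$ and then pass to the closure using continuity of $p_V$ to keep $B$ away from $0$.

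With $B$ constructed and disjoint from $\{0\}$, the Hahn--Banach separation theorem produces $l \in X'$ and $\alpha > 0$ with $l(0) = 0 < \alpha \leq l(b)$ for all $b \in B$. For any $y \in C \setminus \{0\}$ we have $y/p_V(y) \in B_0 \subset B$, so by positive homogeneity
\begin{equation*}
  l(y) = p_V(y)\, l\!\left(\tfrac{y}{p_V(y)}\right) \geq \alpha\, p_V(y) > 0,
\end{equation*}
which shows that $H := \ker l$ is a closed supporting hyperplane of $C$ at $0$ with $H \cap C = \{0\}$. Translating back gives the statement for a general vertex $x$.
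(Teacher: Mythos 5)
Your overall strategy --- build a compact convex set $B$ meeting every ray of $C\setminus\{x\}$, check $0\notin B$ after translating $x$ to $0$, separate $B$ from $0$ by Hahn--Banach, and conclude by positive homogeneity --- is sound and genuinely different from the paper's. The paper instead applies a Bourbaki separation result near the vertex to obtain an open half-space $H_{l<\alpha}$ ($\alpha>0$) whose bounding hyperplane slices the cone in a base, i.e.\ $C=[0,\infty)\cdot(H_{l=\alpha}\cap C)$, and then takes $H=H_{l=0}$. Your route constructs the base by hand from the Minkowski functional, and in the finite-dimensional case that the paper actually uses it is complete and arguably more self-contained: $B_0=\partial V\cap C$ is compact, Carath\'eodory gives $\overline{\co(B_0)}=\co(B_0)$, your properness computation correctly excludes $0$ from $\co(B_0)$, and the separation and homogeneity steps go through.

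For the lemma as stated, however --- in an arbitrary Hausdorff locally convex space --- your patch for the step $0\notin\overline{\co(B_0)}$ is wrong, and this is the one genuinely hard point. You assert that ``sublinearity forces $p_V\geq 1$ on $\co(B_0)$,'' but sublinearity gives the opposite inequality: for $b_1,b_2$ with $p_V(b_i)=1$ one gets $p_V(\lambda b_1+(1-\lambda)b_2)\leq\lambda p_V(b_1)+(1-\lambda)p_V(b_2)=1$, and strict inequality is typical (take $V$ the Euclidean unit ball, $b_1=(1,0)$, $b_2=(0,1)$). So $p_V$ provides no lower bound on $\co(B_0)$, and continuity of $p_V$ cannot keep the closure away from $0$. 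The statement $0\notin\overline{\co(B_0)}$ is in fact true under the hypotheses, but it is essentially equivalent to the cone having a compact base --- that is, to the content of the lemma itself --- and proving it requires an argument of the paper's type (or Klee's theorem on locally compact proper cones), not a Minkowski-functional estimate. In short: your proof settles the finite-dimensional case needed in the paper, but in the general locally convex setting there is a gap precisely where local compactness and properness must interact, and the proposed fix does not close it.
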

	
	\begin{proof}
		To simplify notation we assume, without loss of generality, that $x = 0$.
		Let $U \subset X$ be a convex open neighborhood of $0$ such that $K := \overbar{U} \cap C$ is compact.
		We claim that $C = [0,\infty) \cdot K$, i.e.\@ $C$ is the smallest convex pointed cone which contains $K$.
		In fact, the inclusion $C \supseteq [0,\infty) \cdot K$ is clear from the cone property.
		To see that $C \subseteq [0,\infty) \cdot K$, let $y \in C$.
		Since every $0$-neighborhood in a locally convex space is absorbent, there exists a positive scalar $\lambda > 0$ such that $y \in \lambda U$.
		Hence, $\frac{1}{\lambda} y \in U \cap \frac{1}{\lambda} C = U \cap C \subset K$ and therefore $y = \lambda \frac{1}{\lambda} y \in \lambda K \subset [0,\infty) \cdot K$.
		
		By \cite[II, §7.1, Proposition 2]{Bou87}, there is an open half-space $H_{l < \alpha} \subset X$ such that $0 \in H_{l < \alpha} \cap K \subset U \cap C$.
		We may assume that the scalar $\alpha$ is positive, otherwise pass to the functional $-l$ instead.
		The boundary $\partial H_{l < \alpha} = H_{l = \alpha}$ of this half-space is a closed hyperplane of $X$ which does not contain $0$.
		
		We claim that the cone $C$ is the smallest closed convex pointed cone which contains $H_{l = \alpha} \cap C$, i.e.\@ $C = [0,\infty) \cdot (H_{l=\alpha} \cap C)$.
		To see this, observe that the following inclusion holds:
		\begin{align}
			[0,\infty) \cdot \left(H_{l = \alpha} \cap C\right) 
			= [0,\infty) \cdot \left(H_{l \leq \alpha} \cap C\right)
			\subseteq [0, \infty) \cdot K = C
		\end{align} 
		The converse inclusion $[0,\infty) \cdot (H_{l \leq \alpha} \cap C) \supseteq [0,\infty) \cdot K$ follows from the continuity and hence boundedness on the compact set $K$ of the functional $l$ by observing that there exists a positive real number $\lambda > 0$ such that $\lambda H_{l \leq \alpha} = H_{l \leq \lambda \alpha} \supseteq K$.
		
		Now, set $H := H_{l=0}$.
		Then we have:
		\begin{align}
			H \cap C = H \cap [0, \infty) \cdot (H_{l = \alpha} \cap C) = ( H \cap [0, \infty) \cdot H_{l=\alpha} ) \cap C = \{0\}
		\end{align}
		So $H$ is the desired closed supporting hyperplane for $C$.
	\end{proof}
	
	\begin{lemma}\label{lem:CpctConvSetsCanBeShiftedToPositiveHalfSpace}
		Let $K \subset X$ be a non-empty compact convex subset.
		Let $l \in X'$ be a continuous linear functional and $H_{l = 0}$ be the associated closed hyperplane through $0$.
		Then there exists an extreme point $x \in \ex(K)$ such that $K-x \subset H_{l \geq 0}$.
	\end{lemma}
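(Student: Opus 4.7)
The plan is to realize the required extreme point as a minimizer of $l$ on $K$, using the Krein--Milman theorem applied to the exposed face on which the minimum is attained.

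First I would set $\alpha := \inf_{y \in K} l(y)$. Since $l$ is continuous and $K$ is compact, this infimum is attained, so the set $F := K \cap H_{l = \alpha}$ is non-empty. Moreover, $F$ is convex (as an intersection of convex sets) and closed in $K$, hence compact. I would then observe that $F$ is a \emph{face} of $K$: if $y_1, y_2 \in K$ and $ty_1 + (1-t)y_2 \in F$ for some $t \in (0,1)$, then $\alpha = t l(y_1) + (1-t) l(y_2)$ while $l(y_i) \geq \alpha$, which forces $l(y_1) = l(y_2) = \alpha$ and hence $y_1, y_2 \in F$. In particular, every extreme point of $F$ is also an extreme point of $K$.

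Next I would invoke the Krein--Milman theorem: since $X$ is a Hausdorff locally convex space and $F$ is a non-empty compact convex subset, $F$ has at least one extreme point $x \in \ex(F)$. By the face observation, $x \in \ex(K)$.

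Finally, for every $y \in K$ we have $l(y - x) = l(y) - \alpha \geq 0$, i.e.\@ $y - x \in H_{l \geq 0}$, which is precisely the desired inclusion $K - x \subset H_{l \geq 0}$.

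The only point that needs some care is the appeal to Krein--Milman in the locally convex setting and the verification that extreme points of the exposed face $F$ lift to extreme points of $K$; both are standard, so I do not expect a genuine obstacle here.
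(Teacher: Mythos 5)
Your proof is correct and follows essentially the same route as the paper: set $\alpha := \inf l(K)$ and extract an extreme point of $K$ from the supporting hyperplane $H_{l=\alpha}$. The only difference is that you prove the key fact---that the exposed face $K \cap H_{l=\alpha}$ contains an extreme point of $K$---by the face argument plus Krein--Milman, whereas the paper cites this directly from Bourbaki.
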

	
	\begin{proof}
		By continuity of $l$, the image $l(K) \subset \mathbb{R}$ is bounded.
		Set $\alpha := \inf(l(K))$.
		In other words, $\alpha$ is the largest real number such that $K \subset H_{l \geq \alpha}$.
		In particular, $H_{l=\alpha}$ is a closed supporting hyperplane of $K$.
		By \cite[II, §7.1, Corollary to Proposition 1]{Bou87}, the hyperplane $H_{l=\alpha}$ contains an extreme point $x$ of $K$.
		Moreover, $K-x$ is contained in the positive half-space $H_{l \geq 0}$.
	\end{proof}
	
	Combining the previous two lemmas, we obtain the following immediate consequence:
	
	\begin{corollary}\label{lem:SeparateConvexCompactSetFromLocallyCompactClosedConvexSalientCone}
		Let $C \subset X$ be a locally compact, closed, convex, proper cone with vertex $x$ and let $K \subset X$ be a non-empty compact convex set.
		Then there exists an extreme point $y \in \ex(K)$ and a closed hyperplane $H$ which separates $C-x$ and $K-y$.
		Moreover, these two sets only intersect in $0$, i.e.\@ $(C-x) \cap (K-y) = \{0\}$. 
	\end{corollary}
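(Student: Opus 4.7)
The plan is to obtain the separating hyperplane from \autoref{lem:Bourbaki.Ex.II.2.21.a} applied to the cone $C$, and then use \autoref{lem:CpctConvSetsCanBeShiftedToPositiveHalfSpace} to translate $K$ so that it sits in the opposite closed half-space. The final intersection statement will then follow automatically from the fact that the hyperplane obtained in the first step touches $C$ only at its vertex.

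First I would apply \autoref{lem:Bourbaki.Ex.II.2.21.a} to the cone $C$ to obtain a closed supporting hyperplane $H'$ of $C$ such that $H' \cap C = \{x\}$. Translating by $-x$, the set $H := H' - x$ is a closed hyperplane through $0$, so there is some $l \in X'$ with $H = H_{l=0}$. Because $H$ is a supporting hyperplane for the translated cone $C - x$, we have $C - x \subseteq H_{l \geq 0}$ after possibly replacing $l$ by $-l$; by construction the intersection $(C-x) \cap H_{l=0} = \{0\}$.

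Next I would apply \autoref{lem:CpctConvSetsCanBeShiftedToPositiveHalfSpace} to the compact convex set $K$ with the continuous linear functional $-l$. This yields an extreme point $y \in \ex(K)$ such that $K - y \subseteq H_{-l \geq 0} = H_{l \leq 0}$. The closed hyperplane $H_{l=0}$ therefore separates $C - x$ (contained in $H_{l \geq 0}$) and $K - y$ (contained in $H_{l \leq 0}$), which gives the first assertion.

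For the intersection statement, any point in $(C-x) \cap (K-y)$ must lie in $H_{l \geq 0} \cap H_{l \leq 0} = H_{l=0}$. But $(C-x) \cap H_{l=0} = \{0\}$ by the choice of $H$, so $(C-x) \cap (K-y) \subseteq \{0\}$. Conversely, $0 \in C - x$ because $x$ is the vertex of $C$ and $C$ is pointed, and $0 \in K - y$ because $y \in K$, hence equality holds. There is no serious obstacle here since everything reduces to bookkeeping of signs of the functional $l$; the only subtlety is making sure the two applications of the preceding lemmas are compatible, which is achieved by feeding $-l$ into \autoref{lem:CpctConvSetsCanBeShiftedToPositiveHalfSpace}.
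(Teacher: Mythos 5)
Your proof is correct and is exactly the argument the paper intends: the corollary is stated there as an immediate consequence of combining \autoref{lem:Bourbaki.Ex.II.2.21.a} and \autoref{lem:CpctConvSetsCanBeShiftedToPositiveHalfSpace}, and your sign bookkeeping (feeding $-l$ into the second lemma) and the final intersection argument are the right way to make that combination explicit.
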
	
	

\end{document}